\documentclass[11pt,reqno,letter]{amsart}

\usepackage{amsmath,amsthm,verbatim,amscd,amssymb,setspace,enumitem,hyperref,exscale,color,slashed}
\usepackage[all,pdf]{xy}

\newcommand{\CC}{\mathbb{C}}
\newcommand{\RR}{\mathbb{R}}
\newcommand{\OO}{\mathcal{O}}
\newcommand{\T}{\mathcal{T}}
\newcommand{\im}{\operatorname{Im}}

\newcommand{\tf}{\mathfrak{t}}
\newcommand{\sff}{\mathfrak{s}}
\newcommand{\gf}{\mathfrak{g}}
\newcommand{\rf}{\mathfrak{r}}
\newcommand{\bff}{\mathfrak{b}}

\newcommand{\pf}{\mathfrak{p}}
\newcommand{\nf}{\mathfrak{n}}
\newcommand{\lf}{\mathfrak{l}}
\newcommand{\suf}{\mathfrak{su}}
\newcommand{\PP}{\mathbb{P}}
\newcommand{\ZZ}{\mathbb{Z}}

\newcommand{\tr}{\operatorname{tr}}
\newcommand{\diag}{\operatorname{diag}}
\newcommand{\Pic}{\operatorname{Pic}}

\newtheorem{thm}{Theorem}[section]
\newtheorem{pro}[thm]{Proposition}

\newtheorem{lem}[thm]{Lemma}

\newcounter{mtheorem}
\newtheorem{mtheorem}[mtheorem]{Theorem}

\setcounter{mtheorem}{0}

\theoremstyle{definition}
\newtheorem{define}[thm]{Definition}
\newtheorem{rem}[thm]{Remark}
\newtheorem{ex}[thm]{Example}

\numberwithin{equation}{section}

\textheight235mm
\textwidth171.5mm

\addtolength{\topmargin}{-16mm}
\addtolength{\oddsidemargin}{-22.25mm}
\addtolength{\evensidemargin}{-22.25mm}

\title{Invariant scalar-flat K\"ahler metrics on\\ line bundles over generalized flag varieties}
\address{CUNY Graduate Center}
\email{qyao1@gradcenter.cuny.edu}
\author{Qi Yao}
\date{\today}

\begin{document}

\maketitle

\begin{abstract}
Let $G$ be a simply-connected semisimple compact Lie group, $X$ a compact K\"ahler manifold homogeneous under $G$, and $L$ a negative $G$-equivariant holomorphic line bundle over $X$. We prove that all $G$-invariant K\"ahler metrics on the total space of $L$ arise from the Calabi ansatz. Using this, we then show that there exists a unique $G$-invariant scalar-flat K\"ahler metric in each K\"ahler class of $L$.
\end{abstract}

\thispagestyle{empty}

\markboth{Qi Yao}{Invariant scalar-flat K\"ahler metrics on line bundles over generalized flag varieties}

\section{Introduction}

During the past a few decades, many works arise on explicit construction of scalar-flat K\"ahler metrics on noncompact 4-manifolds, usually with certain symmetry conditions. Relatively earlier works by Eguchi-Hanson \cite{eguchi1979self} and Lebrun \cite{lebrun1988} constructed a family of ALE scalar-flat K\"ahler metrics in the total space of $\OO(-n)$ with $U(2)$ symmetry. Then, in \cite{lebrun1991explicit}, Lebrun adapted the Gibbons-Hawking ansatz in hyperbolic model to construct Scalar-flat K\"ahler metrics with $S^1$ symmetry. Joyce \cite{joyce1995} extended Lebrun's hyperbolic ansatz to toric manifolds and Calderbank-Singer \cite{calderbank2004einstein} applied Joyce's construction to toric resolutions of $\CC^2/\Gamma$ with cyclic quotient and constructed a family of $T^2$-invariant ALE scalar-flat K\"ahler metrics. And Lock-Viaclovski \cite{lock2019smorgaasbord} generalized the results to deal with the minimal resolutions of $\CC^2/\Gamma$, where $\Gamma$ is a finite subgroup of $U(2)$ with no reflection. The existence of ALE scalar-flat K\"ahler metrics in small deformation of resolutions of $\CC^2/\Gamma$ also has been investigated by Honda \cite{honda2013deformation} \cite{honda2014scalar}, Lock-Viaclovski \cite{lock2019smorgaasbord} and Han-Viaclovsky \cite{han2019deformation}. However, in all of these examples, the uniqueness Theorem have been missing for a long time (unlike in the compact case \cite{berman2017convexity, chen2000space}, or in the noncompact case with cusps \cite{auvray2017space}). This is not even completely clear in the $U(2)$-invariant case considered by LeBrun. This paper is concerned with a generalization of LeBrun's existence results to a class of spaces with strong symmetry in all dimensions, and with a classification of scalar-flat K\"ahler metrics on these spaces under a symmetry assumption. 

A compact homogeneous K\"ahler manifold is a compact K\"ahler manifold $(X,\omega)$ on which the automorphism group acts transitively. The classification of this type of spaces has been learned for a long time. In \cite{besse2007einstein}, every compact, simply-connected homogeneous K\"ahler manifold is isomorphic, in the sense of homogeneous complex manifolds, to an orbit of the adjoint representation of some compact semisimple Lie group endowed with canonical complex structure. Then, the classification of compact homogeneous K\"ahler manifolds reduces to classify the orbit space of adjoint representation. In general, each compact homogeneous K\"ahler manifold is the product of a flat complex torus and a compact simply-connected homogeneous K\"ahler manifold. In this paper, we're only interested in the compact homogeneous K\"ahler manifolds without torus part, which we call generalized flag variety.

\begin{mtheorem} \label{kf1} Let $X$ be a generalized flag variety, $L$ be a negative homogeneous line bundle over $X$ with $p: L\rightarrow X$, the natural projection of line bundle. Then, all invariant K\"ahler metrics can be written as 
\begin{align*}
\omega=p^* \omega_X + dd^c \varphi(r),
\end{align*}
where $\omega_X$ is an invariant K\"ahler form on $X$ and $\varphi(r)\in C^\infty(L)$. 
\end{mtheorem}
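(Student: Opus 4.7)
The plan is to reduce any $G$-invariant K\"ahler form $\omega$ on $L$ to the Calabi ansatz in three main steps.

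First I would analyze the $G$-orbit structure on $L$. Averaging any Hermitian metric yields a $G$-invariant Hermitian metric $h$ on $L$, and I set $r(v) := h(v,v)^{1/2} : L \to [0, \infty)$. Since $G$ acts transitively on $X = G/G_{x_0}$, the isotropy $G_{x_0}$ acts on the complex line $L_{x_0}$ through a unitary character $\chi : G_{x_0} \to U(1)$, which must be surjective---otherwise $L$ would be the trivial bundle, contradicting negativity. Consequently $G$ acts transitively on every sphere bundle $\{r = c\}$, and together with the zero section these are all the $G$-orbits in $L$. In particular, every $G$-invariant smooth function on $L$ has the form $\varphi(r)$ for some smooth $\varphi$.

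Second, I would reduce to a closed $(1,1)$-form vanishing on the zero section. Let $\omega_X := i^* \omega$ where $i : X \hookrightarrow L$ is the inclusion of the zero section; this is a $G$-invariant K\"ahler form on $X$. The difference $\eta := \omega - p^* \omega_X$ is $G$-invariant, closed, of pure type $(1,1)$, and vanishes along $X$. Since $L$ deformation retracts to $X$ under radial contraction, $i^* : H^2(L, \RR) \to H^2(X, \RR)$ is an isomorphism, and $i^* \eta = 0$ gives $[\eta] = 0$, so $\eta$ is $d$-exact.

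The heart of the proof is producing a $G$-invariant $dd^c$-potential for $\eta$. I would use the radial contraction $\phi_t(v) = e^{-t} v$ with generator $V = -E$ (minus the Euler field). Because $\phi_t^* \omega \to p^* \omega_X$ as $t \to \infty$ and $\iota_V(p^* \omega_X) = 0$, Cartan's formula and $d\omega = 0$ give
\[
\eta = d\alpha, \qquad \alpha := -\int_0^\infty \phi_t^* (\iota_V \omega) \, dt,
\]
where the integrand is of size $O(e^{-t})$ near the zero section, so the integral converges. The $1$-form $\alpha$ is $G$-invariant, and since $V$ is the real part of the holomorphic fiber-scaling vector field it preserves the complex structure; hence the pure-type-$(1,1)$ condition on $\eta$ forces $\bar\partial \alpha^{(0,1)} = 0$. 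Solving $\alpha^{(0,1)} = \bar\partial f$ for a smooth complex $f$---possible because $H^{0,1}_{\bar\partial}(L) = H^1(L, \OO_L)$ vanishes, which follows from the formal-neighborhood identification $H^1(L, \OO_L) = \bigoplus_{n \ge 0} H^1(X, L^{-n})$ together with $H^1(X, \OO_X) = 0$ and Bott--Borel--Weil vanishing on the ample bundles $L^{-n}$ for $n \ge 1$---yields $\eta = dd^c(\im f)$. Averaging $\im f$ over $G$ produces a $G$-invariant real potential $\psi$ with $\eta = dd^c \psi$, and by the first step $\psi = \varphi(r)$, whence $\omega = p^* \omega_X + dd^c \varphi(r)$.

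The main obstacle is the last step, passing from $d$-exactness to $dd^c$-exactness with a $G$-invariant potential. The ordinary $\partial\bar\partial$-lemma is not available on the non-compact $L$, so the argument must combine the explicit radial primitive above with the cohomological vanishing coming from the flag-variety base; care is also needed to verify the convergence of all integrals and the $G$-invariance of the resulting potential.
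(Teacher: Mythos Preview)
Your outline is essentially correct and follows a genuinely different route from the paper. The paper proceeds by an explicit, representation-theoretic analysis: it classifies all $G$-invariant $1$-forms on the unit circle bundle $M\subset L$ (there are at most three independent ones, $\eta_0,\eta_\alpha,\xi_\alpha$), computes their exterior derivatives by Maurer--Cartan, and then writes any invariant primitive $\theta$ of $\eta$ in this basis. The condition that $d\theta$ be of type $(1,1)$ forces a first-order ODE whose solutions are $\varphi_\alpha,\phi_\alpha\sim r^{-l}$; negativity of $L$ is exactly the inequality $l>0$, which makes an integral of $d\theta\wedge d\theta\wedge\omega_0^{n-1}$ diverge near the zero section unless those coefficients vanish. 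This leaves $d\theta$ in a form that is visibly $dd^c$ of a radial function.

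Your approach instead produces a primitive $\alpha$ by the radial homotopy formula, uses the $(1,1)$-condition to get $\bar\partial\alpha^{(0,1)}=0$, and then invokes $H^{0,1}_{\bar\partial}(L)=0$ to solve $\alpha^{(0,1)}=\bar\partial f$; averaging $\mathrm{Im}\,f$ gives the invariant radial potential. This is cleaner and more portable---it would work over any base with the appropriate vanishing---whereas the paper's argument is self-contained, yields the explicit form of the metric directly, and makes transparent exactly where negativity enters (and why the invariant $dd^c$-lemma fails for positive $L$, via an explicit counterexample).

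One point in your sketch deserves tightening: the identification ``$H^1(L,\OO_L)=\bigoplus_{n\ge 0}H^1(X,L^{-n})$'' is not literally a formal-neighborhood statement. The clean argument is Leray for $\pi:L\to X$: the fibres are Stein, so $R^q\pi_*\OO_L=0$ for $q>0$ and $H^1(L,\OO_L)=H^1(X,\pi_*\OO_L)$; the sheaf $\pi_*\OO_L$ is filtered with graded pieces $L^{-n}$ ($n\ge 0$), each of which has vanishing $H^1$ by Kodaira (write $L^{-n}=K_X\otimes(K_X^{-1}\otimes L^{-n})$ with the second factor ample on the Fano $X$) together with $H^1(X,\OO_X)=0$. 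Alternatively, contract the zero section to obtain the Stein Remmert reduction and use that the singularity is rational. Either way your conclusion stands, but the phrase ``formal-neighborhood identification'' undersells the analytic input needed on a non-compact total space.
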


Based on Theorem \ref{kf1}, we have identified the $G$-invariant K\"ahler metrics in a certain K\"ahler class over $L$ with a class of single variable function. To determine the complete K\"ahler metrics with constant scalar curvature in a certain K\"ahler class, the method of momentum construction developed in \cite{hwang2002momentum}, is applied, which is in fact to solve a second order ODE. The second order ODE turns out to be solved in three cases as $C=0$, $C<0$ and $C>0$. In conclusion, we have the following classification Theorem,

\begin{mtheorem} \label{main} Let $X$ be a generalized flag variety, $L$ be a negative homogeneous line bundle over $X$, then, for any real number $C$, in each K\"ahler class there exists a unique $G$-invariant K\"ahler metric with constant scalar curvature $C$,
\begin{enumerate}
\item[(i)] If $C=0$, there exists a unique complete scalar-flat K\"ahler metric defined over the whole bundle $L$. This metric is an asymptotically conical K\"ahler metric.
\item[(ii)] If $C<0$, there exists a unique complete K\"ahler metric with negative constant scalar curvature and it is defined over a uniform disk of each fibre. And along each fibre the restricted K\"ahler metric is asymptotic to a hyperbolic metric.
\item[(iii)] If $C>0$, there exists a unique K\"ahler metric with positive constant scalar curvature defined over the whole line bundle. This metric can be completed by adding a divisor at infinity. In general, the infinity points are cone singularities along each fiber. The completing manifold is smooth only for some special choice of $(L,\omega_X, C)$. 
\end{enumerate} 
\end{mtheorem}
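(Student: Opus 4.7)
The plan is to combine Theorem~\ref{kf1} with the momentum construction of Hwang--Singer~\cite{hwang2002momentum}. By Theorem~\ref{kf1}, every $G$-invariant Kähler metric on $L$ has the Calabi form $\omega = p^*\omega_X + dd^c\varphi(r)$, so the classification of invariant cscK metrics reduces to a one-variable problem. I pass to the moment-map coordinate $\tau$ for the fibrewise $S^1$-action and encode the metric by its \emph{momentum profile} $\phi(\tau)$, defined on an interval $[0,b)$ with $\tau=0$ corresponding to the zero section. Positivity of $\omega$ becomes $\phi>0$ on $(0,b)$, while smooth extension across the zero section imposes two explicit boundary conditions on $\phi$ at $\tau=0$. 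In this picture the horizontal part of $\omega$ at height $\tau$ is $\omega_\tau = \omega_X + \tau\gamma$, with $\gamma$ the curvature of a background Hermitian metric on $L^{-1}$; negativity of $L$ ensures that $\omega_\tau$ is a smooth invariant Kähler metric on $X$ for every $\tau\geq 0$.

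By the Hwang--Singer formula, the scalar curvature becomes
\[
S(\omega) \;=\; S_{\omega_\tau} \;-\; \frac{1}{p(\tau)}\bigl(p(\tau)\phi(\tau)\bigr)'',
\]
where $p(\tau):=\omega_\tau^n/\omega_X^n$ is the volume polynomial. Both $p(\tau)$ and $S_{\omega_\tau}$ are explicit rational functions of $\tau$ with coefficients determined by the Chern data of $L$ and $\omega_X$, themselves encoded by the root data of $(G,P)$. The equation $S(\omega)=C$ is therefore the linear second-order ODE
\[
\bigl(p(\tau)\phi(\tau)\bigr)'' \;=\; p(\tau)\bigl(S_{\omega_\tau}-C\bigr).
\]
Integrating twice and pinning down the two constants by the zero-section boundary conditions produces, for every $C\in\RR$, a unique candidate profile $\phi_C$. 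This already yields the uniqueness half of the theorem.

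For existence and the trichotomy, I analyze the double antiderivative of $p(\tau)(S_{\omega_\tau}-C)$ and track the sign and growth of $\phi_C$. When $C=0$, the integrand has a definite positive sign, forcing $\phi_0>0$ on $[0,\infty)$ with polynomial growth; the resulting metric is complete and asymptotically conical on the whole of $L$. When $C<0$, the additional positive contribution accelerates this growth and the associated fibre radius is bounded, so the metric is complete only on a disk subbundle with hyperbolic fibre asymptotics near the boundary. When $C>0$ the sign reverses eventually, $\phi_C$ hits zero at some finite $b$ with $\phi_C'(b)<0$, and the metric admits a completion by adding a divisor at infinity biholomorphic to $X$; the completion is smooth along this divisor precisely when an integrality condition of the form $|\phi_C'(b)|\in\{2/k:k\in\ZZ_{>0}\}$ holds, yielding the distinguished $(L,\omega_X,C)$ of case~(iii).

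The main obstacle is this last step. While uniqueness is automatic once the ODE is set up, existence requires positivity of $\phi_C$ on the full conjectured interval together with sharp control of its endpoint behavior in order to extract the asymptotic model in each case. Case~(iii) is the most delicate, since $\phi_C$ is then a difference of terms of mixed sign: one must both isolate its first positive zero and compute $\phi_C'$ there explicitly enough to read off the cone angle and verify the integrality criterion. These estimates rely on a careful analysis of $p(\tau)$ and $S_{\omega_\tau}$ for a general flag variety, which is where the representation theory of $G/P$ enters in an essential way beyond the Calabi reduction of Theorem~\ref{kf1}.
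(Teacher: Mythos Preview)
Your approach is essentially the paper's: reduce via Theorem~\ref{kf1} to the Calabi ansatz, pass to the Hwang--Singer momentum profile, obtain the linear second-order ODE $(\varphi Q)'' = Q(R(\tau)-C)$ with the boundary data $\varphi(0)=0$, $\varphi'(0)=1$ forced by smooth extension across the zero section, and then analyze the sign of $C$. Two small corrections are worth noting. First, in case~(iii) the smoothness criterion at the added divisor is the single equality $\varphi'(b)=-1$ (Proposition~\ref{completeness}), not an integrality condition of the form $|\varphi'(b)|\in\{2/k\}$; the latter would describe orbifold completions, whereas the theorem only asserts that smoothness singles out special triples $(L,\omega_X,C)$. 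Second, the step you flag as the main obstacle---showing that $\varphi$ has exactly one positive simple zero when $C>0$---is handled in the paper not by delicate estimates but by an elementary lemma (Lemma~\ref{poly}): writing $-\Phi''(\tau)$ in the form $c\prod(x+a_k)-\sum b_k\prod_{i\neq k}(x+a_i)$ with $c,a_k,b_k>0$, one divides by $\prod(x+a_k)$ to get a monotone function, so $\Phi''$ changes sign at most once, and then the initial data and leading coefficient force a unique positive root of $\Phi$.
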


This paper is arranged as follows. Section \ref{secflag} is dedicated to preliminaries. In Section \ref{classCHKM}, we classify the generalized flag varieties of a semisimple Lie group $G^\CC$. The Section \ref{classlb} classify the holomorphic line bundle over $X$ by related each line bundle with an integral weight. Then, in Section \ref{2f0},  we discuss the $G$-invariant K\"ahler forms on $X$ and give an explicit expression of curvature form of holomorphic line bundle over $X$. The Section \ref{sec:ddbar} is dedicated to the proof of Theorem \ref{kf1}.  In Sections \ref{lefinv}--\ref{ss:diffof1form}, we discuss the invariant one form over level set of $L$ and their exterior derivatives. Section \ref{ss:invddbar} uses this information to prove an invariant $dd^C$-lemma on $L$. We also construct counterexamples to the invariant $dd^C$-lemma if $L$ is not negative. Section \ref{ss:mainproof} applies these results to prove Theorem \ref{kf1}. Section \ref{sec:mom} is dedicated to the proof of Theorem \ref{main}. In the Section \ref{calabisec}, we give a quick introduction of momentum construction and the Section \ref{calabisec1} proves Theorem \ref{main}.

The author would like to thank Professor Hans-Joachim Hein and Professor Bianca Santoro for suggesting the problem, and for constant support, many helpful comments, as well as much enlightening conversation. This work is completed while the author is supported by graduate assistant fellowship in Graduate Center, CUNY.

\section{Geometry of compact homogeneous K\"ahler manifolds}\label{secflag}

In this section, we recall the geometry of compact homogeneous K\"ahler manifolds. In Section \ref{classCHKM} we shall discuss the classification of compact homogeneous K\"ahler manifolds in terms of generalized flag varieties. In Section \ref{classlb}, given a compact homogeneous K\"ahler manifold, we classify all holomorphic line bundles over it, which can be characterized by the integral weights of a certain root system. Section \ref{2f0} dedicates to classify all $G$-invariant K\"ahler forms on $X$ and all $G$-invariant K\"ahler forms can be written explicitly.

\subsection{Classification of compact homogeneous K\"ahler manifolds} \label{classCHKM}

We say that a compact K\"ahler manifold $X$ is \emph{homogeneous} if the identity component of its biholomorphic isometry group acts transitively on $X$. According to \cite{myers1939group}, this group is a compact Lie group on $X$. In the following, we always let $G$ denote the universal covering of this group, and we assume that $G$ is semisimple. Let $p$ be a distinguished point in $X$. Let $R$ be the isotropy group of $p$ and let $S$ be the identity component of the center of $R$. Then, by \cite{matsushima1957}:
\begin{align*}
\text{The centralizer of $S$ in $G$ is $R$.} \tag{$\ast$}
\end{align*}
So the maximal tori containing $S$ must be contained in $R$. We fix a maximal torus $T$ with $S\subset T\subset R$ and denote the corresponding Lie algebras by $\sff\subset\tf\subset\rf$. Let $\gf$ be the 
Lie algebra of $G$.

Recall some basic notations and facts on Lie algebras as well as the following notations, which will be used throughout this paper. Let $\gf^\CC,\tf^\CC$ be the complexification of $\gf,\tf$, respectively. Let $\Delta(\tf, \gf)$ denote the root system of $\gf^\CC$ with respect to the Cartan subalgebra $\tf^\CC$. The positive root system and simple root system are denoted by $\Delta^+(\tf, \gf)$ and $\Pi=\{\alpha_1,\ldots, \alpha_n\}$, respectively. Notice that $\gf^\CC$ admits a root space decomposition,
\begin{align*}
\gf^\CC= \tf^\CC \oplus  \sum_{\alpha\in \Delta^+(\tf,\gf)} (\gf_\alpha \oplus \gf_{-\alpha}).
\end{align*}
Recall that the Killing form of $\gf^\CC$ is an invariant nondegenerate bilinear form. The Killing form of $\gf^\CC$ induces an inner product on the real vector space generated by the root system \cite[Corollary 2.38]{knapp2013lie}, denoted as $(\cdot, \cdot)$ in the following. Besides, the Killing form is also negative definite when restricted to the real subspace $\gf$, hence the negative Killing form defines an inner product in $\gf$.  By \cite[Chapter VI.1]{knapp2013lie}, $\gf$ is the compact real form of $\gf^\CC$. We introduce a \textit{normalized adapted basis} $\{X_\alpha, Y_\alpha \}$ of each pair of root spaces $\gf_\alpha \oplus \gf_{-\alpha}$, which satisfies the following:
\begin{enumerate}
\item[(a)] Let $\gf_\alpha$ and $\gf_{-\alpha}$ denote the complex eigen-spaces corresponding to the roots $\alpha$ and $-\alpha$. Then
\begin{align}\label{rootvector1} 
X_\alpha-iY_\alpha= E_{\alpha}\in \gf_\alpha\quad\text{and} \quad X_\alpha+iY_\alpha=E_{-\alpha}\in \gf_{-\alpha}.
\end{align}	
\item[(b)] $X_\alpha$ and $Y_\alpha$ are normalized in the sense that,
\begin{align}\label{rootvector2}
	[X_\alpha,Y_\alpha]=- H_\alpha,
\end{align}
where $H_\alpha$ satisfies $\beta( H_\alpha)= (\beta,\alpha)/2i$ for each root $\beta$. Consider the inner product induced by the negative Killing form. One can easily check that $|X_\alpha|^2=|Y_\alpha|^2=1/2$. This normalization will be applied in Section \ref{lefinv} to calculate the differential of invariant 1-forms.
\end{enumerate}
Then the compact real form $\gf$ can be written as follows:
\begin{align}
\gf = \tf\oplus \sum_{\alpha \in \Delta^+(\tf,\gf)} \RR \langle X_\alpha, Y_\alpha \rangle.
\end{align}

We can classify compact homogeneous K\"ahler manifolds in terms of generalized flag varieties. Let $G^{\CC}$ be the complexification of $G$ ($G^{\CC}$ is the simply-connected Lie group with Lie algebra $\gf^\CC$, where we refer to \cite[Chapter III, Section 6.8]{bourbakilie} for more details). Consider a parabolic subgroup $P$ of $G^{\CC}$. Then $P$ is determined by a subset of the simple root system. More precisely, for $\Pi'\subset \Pi$, the corresponding Lie algebra $\pf$ can be decomposed as
\begin{align} \label{paragp}
\pf=\tf^\CC \oplus \sum_{\alpha\in \Delta^+(\tf,\gf)}\gf_{\alpha}\oplus \sum_{\alpha\in  \Pi' } \gf_{-\alpha}=\bff\oplus \sum_{\alpha\in  \Pi' } \gf_{-\alpha},
\end{align}
where  $\bff$ is the Lie algebra of the Borel subgroup, i.e.,
\begin{align*}
\bff=\tf^\CC \oplus \sum_{\alpha\in \Delta^+(\tf,\gf)} \gf_{\alpha}.  
\end{align*} 
The \emph{generalized flag variety} with data $(G, \Pi, \Pi')$ is defined to be the complex manifold $X = G^\CC / P$.

Consider the maximal   compact subgroup $G$ of $G^\CC$. Then $G$ acts transitively on $X$ with stabilizer group $R=G \cap P$. The Lie algebra of the stabilizer group $R$ is
\begin{align} \label{lieofr}
\rf  = \tf \oplus \sum_{\alpha \in \Pi'} \RR \langle X_\alpha, Y_\alpha \rangle. 
\end{align}
Then the generalized flag variety $X$ can also be written as $G/R$, and its complex structure can also be described as follows. Let 
\begin{align}\label{tanroot}
D^+:=\Delta^+(\mathfrak{t},\mathfrak{g})\setminus  \Pi'.
\end{align}
Then $D^+$ is a \textit{closed} subset of the root system in the sense that for any $\alpha, \ \beta \in D^+ $, if $\alpha +\beta$ is a root, then $\alpha +\beta \in D^+$. The tangent space of $X$ at a distinguished point $p$ can be identified with 
\begin{align} \label{tanspace1}
T_p X = \sum_{\alpha \in D^+ } \RR \langle X_\alpha , Y_\alpha \rangle.
\end{align}
We also call $\{X_\alpha, Y_\alpha: \alpha \in D^+ \}$ a \textit{normalized adapted basis of $X$}. There exists a natural $R$-invariant almost complex structure $J$ on $T_p X$ given by
$$J(X_\alpha)=Y_\alpha, \quad J(Y_\alpha)=-X_\alpha.$$ Because $J$ is $R$-invariant, it extends to a $G$-invariant almost complex structure on the whole tangent bundle $TX$. The complexified tangent space  at $p$ splits into
\begin{align} \label{cxtandecom}
\displaystyle T^{(1,0)}_p X = \sum_{\alpha \in D^+} \gf_\alpha, \quad T^{(0,1)}_p X = \sum_{\alpha \in D^+} \gf_{-\alpha}.
\end{align}
One can check that $J$ is an integrable almost complex structure because $D^+$ is closed (see \cite[Section 12]{10.2307/2372795}), and the complex manifold $(X,J)$ is $G$-equivariantly biholomorphic to $G^\CC/P$.

Generalized flag varieties are simply-connected and the proof of this fact will be given in Lemma \ref{simconnected}. There exist K\"ahler forms on $(X,J)$, as discussed in section \ref{2f0}. Hence, each generalized flag variety is a simply-connected compact homogeneous K\"ahler manifold of $G$.  

Conversely, given a compact homogeneous K\"ahler manifold $X$ that admits a transitive holomorphic action by a simply-connected compact semisimple Lie group $G$ with data $(R, T, S)$ and a point $p \in X$ as above, let $\Delta(\tf, \rf)$ denote the root system of $\rf^\CC$ with respect to $\tf^\CC$. This can be viewed as a subset of $\Delta(\tf, \gf)$. Define
$$D := \Delta(\tf, \gf)\setminus\Delta(\tf, \rf).$$
The invariant complex structure $J$ on $X$ determines the set of positive roots $D^+$ as follows. Since the complexified tangent space of $X$ is identified with 
$$T_p^\CC X= \sum_{\gamma\in D} \gf_\gamma,$$
the $R$-invariance of $J$ implies that $J$ preserves each root space. Then $D^+$ is defined to be
$$D^+:= \{\gamma\in D : Jv=iv\;\,\text{for all}\;\,v\in \mathfrak{g}_\gamma\}.$$
The closedness of $D^+$ follows from the integrability of $J$. In addition, we can choose a simple root system $\Pi'$ in $\Delta(\tf,\rf)$. Then $D^+$ and the positive roots, $\Delta^+(\tf, \rf)$, generated by $\Pi'$ determine a positive root system $\Delta^+(\tf, \rf)$ in $\Delta(\tf, \gf)$, i.e.,
$$\Delta^+(\tf,\gf)= D^+ \cup \Delta^+ (\tf, \rf).$$ The set $\Pi'$ can be extended to a simple root system, $\Pi$, of $\Delta(\tf, \gf)$ such that $\Pi$ generates the positive roots $\Delta^+(\tf,\gf)$. 
More details on root systems can be found in \cite[Sections 13.6--13.7]{10.2307/2372795}. Based on this discussion, the Lie algebra $\rf$ can be written in terms of the simple root set $\Pi'$ as in (\ref{lieofr}). Thus, $X$ can be identified with the generalized flag variety associated with the data $(G, \Pi, \Pi')$.

The notion of a generalized flag variety is actually independent of the choice of a simple root system. By \cite[Section 5.13]{adams1982lectures}, the different simple root systems are identified by the action of the Weyl group, and then the associated generalized flag varieties are isomorphic via conjugation by an element of $G$. Hence, without loss of generality, we can fix a simple root system $\Pi$ at the beginning, and then each generalized flag variety of $G$ is classified in terms of a subset of $\Pi$.

In conclusion, we have proved the following theorem, which should be well-known to experts. 

\begin{thm}\label{classhs} Let $X$ be a compact K\"ahler manifold. Assume that $X$ is homogeneous under a simply-connected compact semisimple Lie group $G$. Fix a system of simple roots $\Pi$ of $\mathfrak{g}^\CC$. Then $X$ is $G$-equivariantly biholomorphic to the generalized flag variety of type $\Pi'$ for some subset $\Pi'\subset \Pi$, i.e.,
$$X \cong G^\CC/P, \;\text{where $P$ is the parabolic subgroup of $G^\CC$ determined by $\Pi'$}.$$
\end{thm}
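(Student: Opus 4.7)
The plan is to organize the discussion preceding the theorem into a clean proof. I would start from the general structure theorem for compact homogeneous spaces: since $G$ acts transitively on $X$, we may write $X = G/R$ for the stabilizer $R$ of a distinguished point $p\in X$, and by Matsushima's result $(\ast)$ the group $R$ is the centralizer of the identity component $S$ of its center. This forces any maximal torus $T$ containing $S$ to lie in $R$, so we get the chain $\sff\subset\tf\subset\rf$ and a well-defined root system $\Delta(\tf,\gf)$ containing $\Delta(\tf,\rf)$ as a sub-root system.

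The next step is to extract combinatorial data from the $G$-invariant complex structure $J$. Set $D:=\Delta(\tf,\gf)\setminus\Delta(\tf,\rf)$, so that under the root space decomposition one has $T_p^\CC X=\sum_{\gamma\in D}\gf_\gamma$. Since $J$ commutes with the $R$-action and $T$ preserves each root line, $J$ preserves every $\gf_\gamma$ for $\gamma\in D$ and acts there as multiplication by $\pm i$; define
\begin{align*}
D^+:=\{\gamma\in D: Jv=iv\;\text{for all}\;v\in\gf_\gamma\}.
\end{align*}
Complex conjugation on $\gf^\CC$ swaps $\gf_\gamma$ and $\gf_{-\gamma}$ and anticommutes with $J$, so $D=D^+\sqcup(-D^+)$. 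Integrability of $J$ says that $T^{(1,0)}_pX=\sum_{\gamma\in D^+}\gf_\gamma$ is closed under the bracket on $\gf^\CC$; combined with the bracket rule $[\gf_\alpha,\gf_\beta]\subset\gf_{\alpha+\beta}$, this gives that $D^+$ is closed in the sense of $(\ref{tanroot})$. Choosing any positive system $\Delta^+(\tf,\rf)$ of the sub-root system $\Delta(\tf,\rf)$ with simple roots $\Pi'$, I would then verify that
\begin{align*}
\Delta^+(\tf,\gf):=D^+\cup\Delta^+(\tf,\rf)
\end{align*}
is a positive system of $\Delta(\tf,\gf)$: closedness under addition follows from closedness of $D^+$, closedness of $\Delta^+(\tf,\rf)$, and the fact that $[\rf^\CC,T^{(1,0)}_pX]\subset T^{(1,0)}_pX$ because $J$ is $R$-invariant. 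This positive system admits a simple root system $\widetilde\Pi\supset\Pi'$, and comparing $(\ref{paragp})$ and $(\ref{lieofr})$ shows that $R=G\cap P$ where $P\subset G^\CC$ is the parabolic corresponding to $\Pi'\subset\widetilde\Pi$, so $X=G/R\cong G^\CC/P$ $G$-equivariantly biholomorphically.

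Finally, to replace the a priori system $\widetilde\Pi$ by the preassigned $\Pi$, I would invoke the Weyl group: any two simple systems of $\Delta(\tf,\gf)$ are conjugate under an element $w$ of the Weyl group $W(G,T)=N_G(T)/T$, represented by some $g\in N_G(T)\subset G$. Conjugating the entire setup by $g$ turns $\widetilde\Pi$ into $\Pi$ and replaces $\Pi'$ by $w(\Pi')\subset\Pi$, while $X$ is only changed by a biholomorphism induced by left translation by $g$. The main subtlety to watch is verifying that $D^+$ is indeed closed and that $D^+\cup\Delta^+(\tf,\rf)$ forms a positive system — this is where integrability of $J$ and the invariance of $J$ under $R$ do all the work — but once these facts are in place the rest of the argument is bookkeeping, as sketched in the preceding paragraphs of the excerpt.
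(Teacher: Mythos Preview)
Your proposal is correct and follows essentially the same approach as the paper: the paper's proof of Theorem \ref{classhs} is precisely the discussion in the paragraphs preceding it, which you have faithfully reorganized into a linear argument (Matsushima's $(\ast)$ to get $S\subset T\subset R$, extraction of $D^+$ from $J$, closedness of $D^+$ from integrability, extension of $\Pi'$ to a simple system, and Weyl group conjugation to reach the prescribed $\Pi$). Your treatment is in fact slightly more explicit than the paper's, which defers the verification that $D^+\cup\Delta^+(\tf,\rf)$ is a positive system to \cite[Sections 13.6--13.7]{10.2307/2372795}, whereas you sketch why it holds using the $R$-invariance of $J$.
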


\begin{rem}\label{simconnected}
In fact, all homogeneous manifolds discussed in this paper are simply-connected. More precisely, let $X$ be a compact K\"ahler manifold homogeneous under a simply-connected semisimple compact Lie group $G$. Then $X$ is simply-connected. This fact follows quickly from the fiber bundle $R \hookrightarrow G \to X$. Let $X=G/R$ and let $S$ be the connected center of $R$. According to the statement $(*)$, $R$ is the union of all the maximal tori containing $S$, which implies that $R$ is connected.  The fiber bundle structure
$R \hookrightarrow G \to X$
induces a long exact sequence of homotopy groups
\begin{align*}
    \cdots \rightarrow \pi_1(G) \rightarrow \pi_1(X) \rightarrow \pi_0(R)\rightarrow \cdots,
\end{align*}
and this tells us that $\pi_1(X)=0$.

According to \cite[Satz I]{borel1962kompakte}, each compact homogeneous K\"ahler manifold is the product of a flat complex torus and a simply-connected compact homogeneous K\"ahler manifold. Furthermore, the connected component of the identity of the automorphism group of $X$ is a semisimple Lie group \cite[Satz 4]{borel1962kompakte}. In conclusion, Theorem \ref{classhs} classifies all compact homogeneous K\"ahler manifolds without torus part.
\end{rem}

\subsection{Classification of holomorphic line bundles} \label{classlb}

Let $G$ be a simply-connected compact semisimple Lie group with complexification group $G^\CC$. Let $X$ be a compact K\"ahler manifold homogeneous under $G$. According to Theorem \ref{classhs}, as $X$ can be identified with a generalized flag variety of $G^\CC$, there is a natural $G^\CC$-action on $X$. Consider a holomorphic line bundle $L$ over $X$, with projection $\pi: L \rightarrow X$. The holomorphic line bundle is said to be \textit{$G^\CC$-homogeneous} (or in many articles \textit{$G^\CC$-linearizable}) if there exists a $G^\CC$-action on $L$ such that the projection $\pi$ is $G^\CC$-equivariant and the action is linear on fibers. In particular, we can construct $G^\CC$-homogeneous line bundle as follows.

Given a $1$-dimensional holomorphic representation $\chi:P\rightarrow \CC^*$, a $G^\CC$-equivariant holomorphic line bundle $L_\chi$ over $X = G^\CC/P$  can be constructed as follows:
\begin{align} \label{construcL}
L_\chi= G^\CC\times_\chi \CC= (G^\CC \times \CC)/{\sim},
\end{align}  
where $(gh, v)\sim(g,\chi(h)v)$ for all $g\in G^\CC$, $h\in P$, $v\in \CC$. This admits a natural $G^\CC$-action given by
\begin{align}\label{Gaction2}
g \cdot [(l,v)]=[(gl,v)]
\end{align}
for all $g\in G^\CC$, $l\in G^\CC$, $v \in \CC$. Conversely, given a $G^\CC$-equivariant holomorphic line bundle $L$ over $X$, the stabilizer group $P$ at the distinguished point $p\in X$ acts on the fiber $L_p$, inducing a holomorphic character $\chi: P \rightarrow \CC^*$ such that $L \cong L_\chi$. In conclusion, there is a one-to-one correspondence between the $G^\CC$-homogeneous line bundles over $X$ and the characters of $P$.

Let $L$ be an arbitrary holomorphic line bundle over $X$. We claim that $L$ is a $G^\CC$-homogeneous line bundle. To prove this claim, we need to borrow some results from algebraic geometry. Referring to \cite[Section 21.3]{humphreys2012linear}, $X$ is a projective variety. According to a well-known result from GAGA \cite{serre1956geometrie}, holomorphic line bundles over a projective variety are algebraic; in other words, the line bundle $L$ is algebraic over $X$. Moreover, since $G^\CC$ is a simply-connected semisimple Lie group, by \cite[Proposition 1]{popov1974picard}, the Picard group of $G^\CC$ is trivial. According to the key fact that $\Pic G^\CC =0 $, we can construct a character $\chi: P \rightarrow \CC^*$ such that $L \cong L_\chi$ (see \cite[Theorem 4 or Section 5]{popov1974picard} for details). Thus, we have the following proposition:

\begin{pro} \label{classlinebundle2} All holomorphic line bundles over $X$ are $G^\CC$-homogeneous.
\end{pro}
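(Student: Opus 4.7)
The plan is to follow the outline sketched in the paragraph preceding the proposition, turning the one-to-one correspondence between characters of $P$ and $G^\CC$-homogeneous line bundles into a surjectivity statement by producing, for an arbitrary holomorphic $L\to X$, a character $\chi:P\to\CC^*$ with $L\cong L_\chi$. The natural strategy is to pull $L$ back along the quotient map $\pi:G^\CC\to X=G^\CC/P$, trivialize the pullback, and then read off $\chi$ from the right $P$-action on $G^\CC$ acting on fibers of $\pi^*L$.

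First I would reduce from the holomorphic to the algebraic category. Since $G^\CC$ is a simply-connected semisimple complex algebraic group and $P$ is a parabolic subgroup, $X=G^\CC/P$ is a smooth projective variety (as cited via \cite{humphreys2012linear}). Then GAGA \cite{serre1956geometrie} identifies the category of holomorphic line bundles on $X$ with the category of algebraic line bundles, so it suffices to treat $L$ as an element of $\Pic(X)$ in the algebraic sense.

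Next I would invoke Popov's theorem \cite{popov1974picard} that $\Pic(G^\CC)=0$ for simply-connected semisimple $G^\CC$. This implies that the algebraic line bundle $\pi^*L$ on $G^\CC$ is trivial; fix an algebraic trivialization $\pi^*L\cong G^\CC\times\CC$. The right $P$-action on $G^\CC$ lifts canonically to $\pi^*L$ and, under the chosen trivialization, takes the form $(g,v)\cdot h=(gh,\chi(g,h)v)$ for some regular map $\chi:G^\CC\times P\to\CC^*$. The condition that $L$ is the quotient of $\pi^*L$ by this action, together with the cocycle identity coming from associativity of the $P$-action, forces $\chi$ to be expressible (after possibly modifying the trivialization by an element of $\OO(G^\CC)^*=\CC^*$, which is all that is available since $\Pic(G^\CC)=0$ and $G^\CC$ has no nonconstant invertible regular functions) as a character of $P$. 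One then checks $L\cong L_\chi$ by construction.

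The main obstacle, which is really where Popov's paper does the work, is the last step: turning the trivialization of $\pi^*L$ into a genuine character of $P$ rather than merely a cocycle. This is the content of \cite[Theorem 4]{popov1974picard}, which precisely identifies $\Pic(G^\CC/P)$ with the character group of $P$ whenever $\Pic(G^\CC)=0$, and is the result I would quote to close the argument.
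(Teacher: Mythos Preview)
Your proposal is correct and follows exactly the approach the paper takes in the paragraph preceding the proposition: reduce to the algebraic category via GAGA, invoke Popov's $\Pic(G^\CC)=0$, and then apply \cite[Theorem~4]{popov1974picard} to produce the character $\chi$ with $L\cong L_\chi$. You have merely unpacked the mechanism behind Popov's theorem (pullback, trivialization, cocycle $\to$ character) more explicitly than the paper, which simply cites it.
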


Fixing a simple root system $\Pi$ of $G$, let $X\cong G^\CC/ P$ be such that the parabolic group $P$ is determined by a subset of roots $\Pi'\subset \Pi$ as in (\ref{paragp}).  Let $\Pi=\{\alpha_1,\ldots, \alpha_n\}$ be ordered in such a way that $\alpha_i\in \Pi'$ if and only if $i=k+1,\ldots, n$. Let $\{\omega_1,\ldots, \omega_n\} \subset (\mathfrak{t}^\CC)^*$ be the set of fundamental weights corresponding to $\Pi$, which are defined by
\begin{align*}
	 \frac{2(\omega_i,\alpha_j)}{(\alpha_j,\alpha_j)}=\delta_{ij}\;\,(1\leq i,j\leq n).  
\end{align*}
Here the inner product is the bilinear form induced by the Killing form of $\gf$. The lattice generated by $\{\omega_1,\ldots,\omega_n\}$, i.e., all vectors of the form $\sum_{n_i\in \ZZ} n_i \omega_i$, is called the lattice of algebraically integral weights. Since $G$ is a simply-connected semisimple Lie group, the analytically integral weights coincide with the algebraically integral weights (see \cite[Chapter IV.7]{knapp2013lie}). Thus, each algebraically integral weight induces a character of the maximal torus $T$.  
 Let $S$ again be the center of the real stabilizer group $R$, and let $\sff,\sff^*$ denote the Lie algebra of $S$ and its dual space, respectively. More precisely,  
\begin{align*}
\sff=\{H\in \tf : \alpha (H)=0, \ \forall \alpha \in \Delta(\tf,\rf) \},\quad \sff^*=\{\beta\in \tf^* : (\alpha,\beta )=0, \ \forall \alpha \in \Delta(\tf,\rf) \}.
\end{align*}
Then $(\sff^\CC)^*=\CC \langle \omega_1,\ldots,\omega_k\rangle$ and the intersection of the weight lattice with $(\sff^\CC)^*$ is $\ZZ\langle \omega_1,\ldots, \omega_k\rangle$. We call the elements of this sublattice \emph{integral weights on $X$}. Given such an element $\lambda$, there exists an associated character $\chi^\lambda: S^\CC\rightarrow \CC^*$ defined by 
\begin{align}\label{char1}
\chi^\lambda(\exp{v})=\exp{ \lambda(v)}
\end{align}
for all $v \in \mathfrak{s}^\CC \subset \mathfrak{t}^\CC$. The character $\chi^\lambda$ can be extended to $P$ in the following way. The Lie subalgebra $\pf$ can be decomposed as follows:
\begin{align*}
\pf &
=\underbrace{\sff^\CC\oplus \sum_{\beta\in D^+} \gf_{\beta}}_{\pf_1} \oplus  \underbrace{\sum_{\alpha\in \Pi'} \CC\langle H_\alpha \rangle \oplus \sum_{\alpha\in  \Pi' } (\gf_{\alpha}\oplus \gf_{-\alpha})}_{\pf_2}\\
&= \sff^\CC \oplus \nf\ \oplus \pf_2,
\end{align*} 
where $ \pf_1$ is the solvable part of $\pf$, $\nf $ is the nilpotent part of $\pf_1$, and $\pf_2$ is the semisimple part of $\pf$. Each integral weight $\lambda$ on $X$ can be extended to a complex Lie algebra homomorphism $\sigma:\pf \rightarrow \CC$ by defining the extension to be $0$ on both $\nf$ and $\pf_2$. Then the corresponding holomorphic character $\chi^\sigma$ extends $\chi^\lambda$ from $S^\CC$ to $P$. By abuse of notation, we will denote this extension by $\chi^\lambda$. The following proposition shows that all characters of $P$ arise by extension in this way.

\begin{pro}\label{classlinebundle1}
For all holomorphic characters $\chi: P\rightarrow \CC^*$ there exists an integral weight $\lambda$ on $X$ such that $\chi=\chi^\lambda$.
\end{pro}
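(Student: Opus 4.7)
The plan is to pass to the Lie algebra level and then reconstruct $\chi$ from an integral weight. First I would differentiate $\chi$ to obtain a complex Lie algebra homomorphism $d\chi : \pf \to \CC$, which vanishes on $[\pf,\pf]$ because the target is abelian. Using the decomposition $\pf = \sff^\CC \oplus \nf \oplus \pf_2$ introduced just before the statement, I would then separately verify that $d\chi$ kills both $\nf$ and $\pf_2$: on the one hand $\pf_2$ is the semisimple Levi component of $\pf$, so $[\pf_2,\pf_2]=\pf_2$; on the other hand, for each $\beta \in D^+$ one has $E_\beta = \beta(H)^{-1}[H,E_\beta]$ whenever $H\in\tf^\CC$ satisfies $\beta(H)\neq 0$, so $\nf = \sum_{\beta\in D^+}\gf_\beta \subset [\tf^\CC,\nf] \subset [\pf,\pf]$. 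Consequently $d\chi$ is determined by the single linear functional $\lambda := d\chi|_{\sff^\CC} \in (\sff^\CC)^*$.

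Next I would verify that $\lambda$ is an integral weight on $X$, i.e.\ that $\lambda \in \ZZ\langle \omega_1,\ldots,\omega_k\rangle$. The decomposition $\tf^\CC = \sff^\CC \oplus \sum_{\alpha\in\Pi'}\CC H_\alpha$, together with $\sum_{\alpha\in\Pi'}\CC H_\alpha \subset \pf_2$, forces $d\chi|_{\tf^\CC}$ to vanish on the second summand, so the corresponding element of $(\tf^\CC)^*$ lies in $\sff^* = \RR\langle \omega_1,\ldots,\omega_k\rangle$. For integrality I would restrict $\chi$ to the compact maximal torus $T \subset R \subset P$: then $\chi|_T$ is a character of the compact torus $T$, so its differential is analytically integral, and the identification between analytic and algebraic integrality for simply-connected semisimple $G$ (already invoked in Section \ref{classlb}) places $d\chi|_{\tf^\CC}$ in the weight lattice $\ZZ\langle \omega_1,\ldots,\omega_n\rangle$. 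Intersecting with $\sff^*$ gives $\lambda \in \ZZ\langle \omega_1,\ldots,\omega_k\rangle$.

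Finally, with this $\lambda$ in hand, I would compare $\chi$ with the character $\chi^\lambda$ constructed in the paragraph preceding the proposition. By construction $d\chi^\lambda$ agrees with $d\chi$ on $\sff^\CC$ and vanishes on $\nf \oplus \pf_2$, so $d\chi = d\chi^\lambda$ on all of $\pf$. Since $P$ is a parabolic subgroup of the connected group $G^\CC$ it is connected, and a holomorphic character of a connected complex Lie group is determined by its differential; hence $\chi = \chi^\lambda$.

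The main obstacle will be the integrality step: one has to combine the purely algebraic constraint $d\chi|_{\tf^\CC}\in\sff^*$, which comes from the vanishing of $d\chi$ on the semisimple Levi part $\pf_2$, with the analytic constraint that $\chi|_T$ exponentiates from a genuine character of the compact torus $T$, and to appeal to simple-connectedness of $G$ in order to equate the two notions of integrality. The remaining steps amount to bookkeeping once the decomposition $\pf = \sff^\CC \oplus \nf \oplus \pf_2$ and the vanishing of $d\chi$ on $[\pf,\pf]$ are in place.
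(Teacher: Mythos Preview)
Your proposal is correct and follows the same strategy as the paper: differentiate $\chi$ to a Lie algebra homomorphism $\sigma:\pf\to\CC$ and show it vanishes on $\nf$ and on the semisimple part $\pf_2$, so that $\sigma$ is determined by its restriction to $\sff^\CC$. The paper's argument differs only cosmetically (it uses $\nf=[\pf_1,\pf_1]$ for the nilradical and $\mathfrak{sl}(2,\CC)$-triples for $\pf_2$ rather than your $\nf\subset[\tf^\CC,\nf]$ and $[\pf_2,\pf_2]=\pf_2$), and it leaves implicit the two steps you spell out---the integrality of $\lambda$ via $\chi|_T$ and the identification $\chi=\chi^\lambda$ via connectedness of $P$---so your write-up is in fact more complete.
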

\begin{proof}
The character $\chi: P\rightarrow \CC^*$ induces a Lie algebra homomorphism $\sigma: \pf\rightarrow \CC$. It suffices to prove that $\sigma$ is trivial if restricted to $\nf$ and $\pf_2$. Notice that $\pf_1$ is a solvable Lie algebra and $\nf=[\pf_1,\pf_1]$.  The restriction of $\sigma$ to $\nf$ must be trivial as $\CC$ is an abelian Lie algebra. Since $\pf_2$ is a semisimple Lie algebra, for each root $\alpha \in D^+$ there exist $X_{\alpha}\in \gf_\alpha$, $X_{-\alpha}\in \gf_{-\alpha}$ and $H_{\alpha}=[X_\alpha,X_{-\alpha}]$ such that 
\begin{align*}
\mathfrak{l}_{\alpha}:=\CC\langle H_\alpha,X_{\alpha},X_{-\alpha}\rangle\cong \sff\lf(2,\CC).
\end{align*}
By the representation theory of $\sff\lf(2,\CC)$, the only possible $1$-dimensional representation of $\mathfrak{l}_\alpha$ is trivial. Since the restriction of $\sigma$ to each $\mathfrak{l}_\alpha$ is trivial, we conclude that $\sigma|_{\pf_2}=0$. 
\end{proof}

Summarizing, we have now proved that every integral weight $\lambda\in \ZZ\langle \omega_1,\ldots, \omega_k \rangle$ induces a character, $\chi^\lambda: P\rightarrow \CC^*$, hence a homogeneous line bundle $L_\lambda$. Conversely, every holomorphic line bundle $L$ is of the form $L\cong L_\lambda$ for some $\lambda \in \ZZ\langle \omega_1, \ldots, \omega_k \rangle$. 
Recall that an integral weight $\lambda$ on $X$ is called  \textit{dominant} if $\lambda=\sum_{i=1}^kn_i\omega_i$ with $n_i>0$.
By the Highest Weight Theorem, for each dominant integral weight $\lambda$ there exists a unique finite-dimensional irreducible complex representation $V(\lambda)$ of $G$ with highest weight $\lambda$. The Bott-Borel-Weil Theorem \cite[Section 7]{bott1957homogeneous} states that the space of global sections of $L_\lambda$ is isomorphic to $V(\lambda)$ as a $G$-module. 
Also by the Bott-Borel-Weil Theorem, the global sections of $L_\lambda$ induce an embedding $X\hookrightarrow \PP(V(\lambda)^*)$, so $L_\lambda$ is very ample. In conclusion, we have the following.

\begin{thm} \label{classlinebundle2}
The Picard group of the compact homogeneous K\"ahler manifold $X$ can be identified with the sublattice $\ZZ\langle\omega_1,\ldots,\omega_k\rangle$ of the lattice of integral weights. Under this identification, ample line bundles correspond to dominant integral weights, and are automatically very ample.
\end{thm}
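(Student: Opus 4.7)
The plan is to combine the two propositions preceding this theorem to obtain a bijection of sets between $\Pic(X)$ and the sublattice $\ZZ\langle \omega_1,\ldots,\omega_k\rangle$, upgrade it to a group isomorphism, and then characterize ampleness in terms of dominance of the corresponding weight.

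First, the preceding proposition that every holomorphic line bundle on $X$ is $G^\CC$-homogeneous implies that every $L\in \Pic(X)$ has the form $L_\chi$ for some character $\chi: P\to \CC^*$. By Proposition \ref{classlinebundle1}, every such character is of the form $\chi^\lambda$ for some integral weight $\lambda\in \ZZ\langle \omega_1,\ldots,\omega_k\rangle$, and uniqueness of $\lambda$ follows from \eqref{char1}, since distinct weights induce distinct characters on $S^\CC$. To promote this bijection to a group isomorphism I would verify that $L_{\lambda+\mu}\cong L_\lambda\otimes L_\mu$, which is immediate from the associated-bundle construction \eqref{construcL}: tensor products of associated line bundles correspond to products of characters, and $\chi^{\lambda+\mu}=\chi^\lambda\chi^\mu$ by \eqref{char1}.

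The forward direction of the ampleness claim is essentially contained in the paragraph preceding the theorem statement. For dominant $\lambda=\sum_{i=1}^k n_i\omega_i$ with all $n_i>0$, the Bott-Borel-Weil theorem identifies $H^0(X,L_\lambda)$ with $V(\lambda)^*$, and its global sections define a $G$-equivariant embedding $X\hookrightarrow \PP(V(\lambda)^*)$; hence $L_\lambda$ is very ample. For the converse, if $L_\lambda$ is ample then $L_{m\lambda}=L_\lambda^{\otimes m}$ is very ample for $m\gg 1$ and in particular has nonzero global sections. Applying Bott-Borel-Weil to $m\lambda$, nonvanishing of $H^0(X,L_{m\lambda})$ forces $m\lambda$ to be dominant, hence each $n_i\geq 0$. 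Strict positivity of the $n_i$ then has to be extracted from the embedding property rather than mere nonvanishing of sections.

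This last step is the main obstacle, since it requires either a careful orbit-separation argument or a direct appeal to positivity. A cleaner alternative I would prefer is to invoke Kodaira's criterion: $L_\lambda$ is ample if and only if it admits a smooth Hermitian metric of positive curvature. Using the explicit formula for the $G$-invariant curvature form that will be derived in Section \ref{2f0}, one can express the curvature eigenvalues at the distinguished point $p$ in terms of the pairings $(\lambda,\alpha)$ for $\alpha\in D^+$. Positivity of the curvature is then equivalent to $(\lambda,\alpha)>0$ for every $\alpha\in D^+$, which in turn is equivalent to $\lambda=\sum n_i\omega_i$ being strictly dominant, i.e., $n_i>0$ for all $i$. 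This gives a clean equivalence of ampleness and dominance in both directions, and simultaneously shows that ample and very ample coincide on $X$.
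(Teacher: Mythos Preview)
Your proposal is correct and lands on essentially the same argument as the paper: the bijection comes from the two preceding propositions, the implication ``dominant $\Rightarrow$ very ample'' from Bott--Borel--Weil, and the converse ``ample $\Rightarrow$ dominant'' from the explicit invariant curvature formula \eqref{chern2} in Section~\ref{2f0}, whose positivity is equivalent to $(\lambda,\alpha)>0$ for all $\alpha\in D^+$. The only step both you and the paper leave implicit is why ampleness forces the \emph{invariant} curvature form (rather than merely some Hermitian curvature) to be positive; this is immediate by averaging a positive representative of $c_1(L_\lambda)$ over $G$ and noting that $G$-invariant closed real $(1,1)$-forms in a fixed class on $X$ are unique.
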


\begin{proof}
Given the previous discussion, we only need to prove that if $L_\lambda$ is ample, then the weight $\lambda$ is dominant. In Section \ref{2f0}, we will calculate the curvature form of $L_\lambda$ in (\ref{chern2}). The curvature form is positive if and only if $(\lambda, \alpha)>0$ for all $\alpha \in D^+$,
which implies that $\lambda$ is dominant. 
\end{proof}

\subsection{Invariant $2$-forms on $X$} \label{2f0}

In this section, we summarize the results of $G$-invariant K\"ahler 2-forms on $X$. This is needed for the proof of Theorem B in Section 4.2 below. Afterwards, we compute some important invariant K\"ahler forms on $X$, such as the related Chern forms of line bundles and the K\"ahler-Einstein form on $X$. These computations were used in the proof of Theorem 2.5 above.

Throughout this section, let $X$ be a general homogeneous manifold with respect to $(G,R)$. Fixing a distinguished point $p\in X$, let $\{X_\alpha, Y_\alpha, \alpha\in D^+\}$ be a normal adapted basis of $T_p X$ as defined in (\ref{rootvector1}), (\ref{rootvector2}). Then, we write $\{\eta_\alpha, \xi_\alpha, \alpha \in D^+ \}$ as its dual basis of $T_p^* X$. 

The results of $G$-invariant K\"ahler 2-forms on $X$ mainly comes from (\cite{besse2007einstein}, chapter 8). A slight difference is that we attempt to rewrite the theory without embedding $X$ into $\gf$ as an orbit of adjoint representation but in terms of basis $\{\eta_\alpha,\xi_\alpha; \alpha\in D^+\}$ at distinguished point $p$. Recall that the tangent vectors at $p$ can identify with some element in $\gf$ as (\ref{vector1}). All tangent vectors at each point on $X$ can be generated in this way. Globally, this infinitesimal transformation generate a vector field (may not $G$-invariant). If $A\in \gf$, we write $V_A$ as a vector field on $X$ defined by
\begin{align*}
V_A(x)=\frac{d}{dt}\Big|_{t=0}\exp(tA)(x), \qquad \text{ for all } x\in X.	.
\end{align*}
We call $V_A$ the \textit{fundamental vector field} related to $A$. Then, by quick calculation, the Lie bracket of two fundamental vector fields is also  fundamental as
\begin{align}\label{vector2}
[V_A, V_B]=-V_{[A,B]},
\end{align}
The reason we introduce fundamental vector fields is to test invariant 2-forms on $X$. Recall that the negative Killing form defines a $G$-invariant metric $(\cdot,\cdot)$ on $\gf$. Let $S\in \sff$, a 2-form can be defined at distinguished point $p$ as follows,
\begin{align*}
\omega_S(V_A, V_B)=(S,  [A,B]).
\end{align*}  
Noting that $\sff$ is the center of $\rf$, $\omega_S$ is well-defined. It is easy to see $\omega_S$ is $R$-invariant. By taking $N \in \rf$ and using (\ref{vector2}), we have
\begin{align*}
L_{V_N} \omega_S (V_A, V_B)= 
&- \omega_S ([V_N, V_A],V_B) - \omega_S (V_A, [V_N,V_B])\\
=&\ \omega_S(V_{[N,A]},V_B) + \omega_S ( V_A, V_{[N, B]})\\
=&\ (S, [[N,A],B])+(S, [A,[N,B]])\\
=&\ ([S,N], [A,B])=0.
\end{align*} 
Hence, we can define an invariant global 2-form by $G$-action on $X$. Then, we can check that the global 2-form, also denoted by $\omega_S$ is a closed, real, $(1,1)$ form on $X$. The closedness directly follows from the following calculation,
\begin{align*}
d\omega_S (V_A,V_B,V_C)= &
V_A (\omega_S(V_B, V_C))+V_B(\omega_S (V_C,V_A))+V_C (\omega_S(V_A,V_B))\\
& \qquad -\omega_S([V_A,V_B], V_C) - \omega_S([V_B, V_C],V_A) - \omega_S([V_C,V_A], V_B)\\
=& \omega_S([V_A,V_B],V_C)+\omega_s([V_B,V_C],V_A)+\omega_S([V_C,V_A],V_A)=0,
\end{align*}
where the second equality is due to $G$-invariance and the last one is based on Jacobi identity. $\omega_S$ is $(1,1)$-form by testing $\omega_S$ on basis of tangent vector space, $\{ X_\alpha, Y_\alpha,\alpha\in D^+ \}$. It is observed that the only nonvanishing terms are 
\begin{align} \label{2f1}
\omega_S(X_\alpha, Y_\alpha)=(S, [X_\alpha, Y_\alpha])= (S, - H_{\alpha})=-\frac{1}{2i}\alpha(S), \qquad \text{ for all } \alpha \in D^+ 
\end{align} 
Hence, $\omega_S$ is of type $(1,1)$. The previous observation (\ref{2f1}) indicates that we can write down the $\omega_S$ explicitly in terms of covector basis $\{\eta_\alpha,\xi_\alpha, \alpha\in D^+\}$ at $p\in X$. Precisely,
\begin{align} \label{2f2}
\omega_s=\sum_{\alpha\in D^+} C_\alpha(S) \eta_\alpha\wedge \xi_\alpha,\qquad C_\alpha(S)=-\frac{1}{2i}{\alpha(S)}
\end{align}
where $C_\alpha$ can be viewed as a linear function in $\sff^*$. Also by $G$-action, we obtain a global 2-form, as well, denoted by
$\omega_S =\sum C_\alpha(S) \eta_\alpha \wedge \xi_\alpha$. An intriguing point is that $\eta_\alpha$, $\xi_\alpha$ are usually not well-defined as a global invariant 1-form. 

Conversely, given a $G$-invariant closed real $(1,1)$ form, $\omega$, it can always be written as formula (\ref{2f2}) related to some $S$. Since $\omega$ is $G$-invariant, for each $A,B,N\in \gf$,
\begin{align*}
0 = (L_{V_N}\omega)(V_A, V_B)
= L_{V_N}(\omega(V_A, V_B)) - \omega([V_N,V_A],V_B) - \omega(V_A, [V_N,V_B]) 
\end{align*} 
Especially, by taking $N$ to be an element in $\tf$, $A,B$ to be elements of $\{X_\alpha, Y_\alpha, \alpha \in D^+\}$, noting that $V_N(\omega(V_A, V_B))$ is vanishing here, we have
\begin{enumerate}
\item[$\bullet$] The only nonvanishing case are $\displaystyle\omega(X_\alpha,Y_\alpha)=C_\alpha,$ $\alpha\in D^+$.
\item[$\bullet$] All other cases are vanishing. Precisely, for two different roots in $D^+$, $\alpha, \beta$, $\omega(X_\alpha, X_\beta)=\omega(X_\alpha, Y_\beta)=\omega(Y_\alpha,Y_\beta)=0.$
\end{enumerate}
It suffices to show that $C_\alpha \in \sff^*$. Since $2$-form $\omega$ and inner product in $\gf$ can be extended linearly to complex field, consider the following complex vectors
\begin{align} \label{cxvector}
U=X_\alpha-iY_\alpha, \quad V=X_\beta-iY_\beta,\quad W=X_{\alpha+\beta}+iY_{\alpha+\beta},
\end{align}
where $\alpha, \beta, \alpha+\beta\in D^+$ and $X_{\alpha,\beta,\alpha+\beta}$, $Y_{\alpha,\beta,\alpha+\beta}$ are taking from the normal adapted basis as in (\ref{rootvector1}), (\ref{rootvector2}). Viewing $U$, $V$, $W$ as elements in $\gf\otimes \CC$,  the $G$-invariant inner product satisfies,
\begin{align*}
(W,[U,V])=([W,U],V).	
\end{align*}
Therefore, it is easy to check $U,V,W$ satisfies the following , 
\begin{align*}
[U,V]=\lambda \overline{W},
\quad [V,W]=\lambda \overline{U},
\quad [W,U]=\lambda \overline{V}	
\end{align*}
By closedness and $G$-invariance of $\omega$, we have
\begin{align}
d\omega (U, V, W)	= 
&\ \omega([U, V], W)+\omega([V, W], U)+\omega([W, U], V) \nonumber \\
= &- \lambda\big(\omega(\overline{W}, W)+\omega(\overline{U}, U)+\omega(\overline{V}, V)\big)=0. \label{2f3}
\end{align}
Inserting (\ref{cxvector}) into (\ref{2f3}), we have
\begin{align*}
\omega(X_{\alpha+\beta}, Y_{\alpha+\beta}) = 
\omega(X_\alpha,Y_\alpha) +\omega(X_\beta, Y_\beta),	
\end{align*}
hence, $C_{\alpha+\beta}=C_{\alpha}+C_\beta\ (*)$. Noticing that $C$ defines a function on $D^+$, the condition $(*)$ implies that $C$ can be extended to a linear function on $\sff^*$ as $D^+$ generates $\sff^*$. In other words,
there exists an element $S\in \sff$ such that 
\begin{align*}
C_\alpha=-\frac{1}{2i}\alpha(S)	
\end{align*}
In conclusion, we have the following proposition,
\begin{pro} \label{2f4}
For any $S\in \sff$, the related two form $\omega_S$ is a $G$-invariant  closed real $(1,1)$-form. Conversely, any $G$-invariant  closed real $(1,1)$ form, $\omega$, can be related to a unique $S\in\sff$, i.e. $\omega=\omega_S$. Furthermore, at a distinguished point $p\in X$, $\omega_S$ is written explicitly as (\ref{2f2}). $\omega_S$ is positive if and only if $C_\alpha (S)$ is positive for all $\alpha\in D^+$.
\end{pro}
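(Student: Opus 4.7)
The plan is to prove the two directions separately, and the content of both directions is essentially contained in the calculations preceding the statement; the proof proposal is mainly to organize them.

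For the forward direction, given $S \in \sff$, I would first check that the bilinear form $\omega_S(V_A, V_B) = (S,[A,B])$ defined at the distinguished point $p$ is $R$-invariant, by using $\Ad$-invariance of the negative Killing form together with the fact that $S$ is central in $\rf$ (this is the computation $(L_{V_N}\omega_S)(V_A, V_B) = ([S,N],[A,B])=0$ for $N\in\rf$ already displayed). Since the tangent space at $p$ is identified with $\gf/\rf$ via fundamental vector fields, $R$-invariance of $\omega_S$ at $p$ allows us to propagate it by the $G$-action to a globally well-defined $G$-invariant $2$-form on $X=G/R$. Closedness is then a direct consequence of the Jacobi identity on fundamental vector fields, as already shown. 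To verify the type and the explicit formula \eqref{2f2}, I would test $\omega_S$ on the normalized adapted basis $\{X_\alpha, Y_\alpha\}_{\alpha \in D^+}$ and use the normalization $[X_\alpha, Y_\alpha] = -H_\alpha$ with $\beta(H_\alpha) = (\beta,\alpha)/2i$. Only the pairs $(X_\alpha, Y_\alpha)$ give a nonzero value, yielding $C_\alpha(S) = -\alpha(S)/(2i)$, which both establishes $(1,1)$-type (nonvanishing only on matched conjugate directions) and reality (since $\alpha(S)\in i\RR$ for $S\in\sff\subset\tf$).

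For the converse, I would start from an arbitrary $G$-invariant closed real $(1,1)$-form $\omega$ and reduce it to its values $C_\alpha := \omega_p(X_\alpha, Y_\alpha)$ at $p$. Invariance under the maximal torus $T$ forces the mixed values $\omega(X_\alpha, X_\beta)$, $\omega(X_\alpha, Y_\beta)$, $\omega(Y_\alpha, Y_\beta)$ to vanish when $\alpha \neq \beta$, because $[H, X_\alpha] = \alpha(H) Y_\alpha$ for $H\in\tf$ forces eigenvalue considerations to kill all non-diagonal contributions; the $(1,1)$-type assumption is what rules out $\omega(X_\alpha, Y_\alpha)$-type mixings across different roots. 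The closedness condition applied to the complex vectors $U,V,W$ built from roots $\alpha, \beta, \alpha+\beta$, as displayed in \eqref{2f3}, then yields the additivity $C_{\alpha+\beta} = C_\alpha + C_\beta$. Since $D^+$ spans $\sff^*$ (as the roots vanishing on $\sff$ are exactly those in $\Delta(\tf,\rf)$, which are killed in $D^+$), this additive function on $D^+$ extends uniquely to a linear functional on $\sff$, hence determines a unique $S \in \sff$ with $C_\alpha = -\alpha(S)/(2i)$, and consequently $\omega_p = (\omega_S)_p$. Global equality $\omega = \omega_S$ then follows from $G$-invariance of both forms and transitivity of the $G$-action.

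Finally, positivity is immediate from the explicit diagonal form \eqref{2f2}: since $\omega_S = \sum_{\alpha \in D^+} C_\alpha(S)\, \eta_\alpha \wedge \xi_\alpha$ at $p$ with respect to the basis $\{X_\alpha, Y_\alpha\}$ which is almost complex (in the sense $JX_\alpha = Y_\alpha$), each wedge $\eta_\alpha \wedge \xi_\alpha$ is a positive $(1,1)$-form, so $\omega_S(\cdot, J\cdot) > 0$ at $p$ if and only if $C_\alpha(S) > 0$ for every $\alpha \in D^+$, and $G$-invariance extends this pointwise statement to all of $X$.

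The main obstacle, and the only step requiring genuine argument, is the extraction of the element $S$ in the converse direction: one must verify that the linear functional $C$ built from the additive law on $D^+$ actually extends to $\sff$, which is where one uses that the restrictions $\alpha|_\sff$ for $\alpha \in D^+$ span $\sff^*$, equivalently that $\sff$ is precisely the common kernel of $\Delta(\tf,\rf)$ but not of $D^+$. Everything else is bookkeeping of the calculations already done in the lead-up to the statement.
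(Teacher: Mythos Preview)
Your proposal is correct and follows essentially the same route as the paper: the forward direction via $R$-invariance, Jacobi identity, and evaluation on the adapted basis, and the converse via $T$-invariance to kill off-diagonal terms, then closedness on the triples $U,V,W$ to obtain additivity $C_{\alpha+\beta}=C_\alpha+C_\beta$, and finally extension to a linear functional corresponding to a unique $S\in\sff$. You have correctly identified that all the substantive computations already appear in the text preceding the proposition, and your organization of them matches the paper's own logic.
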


Proposition \ref{2f4} implies a bijection from $\sff$ to the space of $G$-invariant closed real $(1,1)$ forms on $X$, $S\rightarrow \omega_S$. There is a special element in $G$-invariant closed real $(1,1)$ forms, K\"ahler-Einstein form, and the related element in $\sff$ is given by
 \begin{align*}
S_{KE}=2\sum_{\alpha\in D^+} H_{\alpha}.
\end{align*}
So we can write $\omega_{KE}$ explicitly at distinguished point $p\in X$, with respect to the normal adapted basis. Let $\delta$ be the sum of all positive roots in $D^+$
\begin{align} \label{KEform}
	\omega_{KE}=\frac{1}{2}\sum_{\alpha \in D^+} (\alpha,\delta)\eta_\alpha \wedge \xi_\alpha.
\end{align}
The Ricci curvature with respect to $\omega_{KE}$ can be represented explicitly
\begin{align} \label{ricci}
\rho_X = \frac{1}{2} \sum_{\alpha \in D+} (\alpha,\delta) \eta_\alpha \wedge \xi_\alpha=\omega_{KE}.
\end{align}
The positivity of $\omega_{KE}$ and the calculation of Ricci form (\ref{ricci}) can be found in (\cite{besse2007einstein}, section 8C). Hence, each homogenous space is Fano.

Fixing a line bundle related to a weight $\lambda$, the Chern class of $L_\lambda$ can be represented by the form $-i \partial\overline{\partial} \log r^2$, where $r$ is a radian function induced by some hermitian metric on $L_\lambda$. Then, we have
\begin{align}\label{chern1}
\frac{1}{2}dd^c\log r^2 
= -d\big(J\frac{dr}{r}\big).
\end{align}
Indeed, the 1-form $
\displaystyle -Jdr/r$ is induced by circle action along each fibre and more details will be discussed in the next section. Referring to the Proposition \ref{der1}, the curvature form can be represented as
\begin{align}\label{chern2}
		-\frac{1}{2}dd^c \log h &= \frac{1}{2}\sum_{\alpha\in\Delta^+}  (\lambda,\alpha) \eta_\alpha\wedge \xi_\alpha.
\end{align}
Comparing with (\ref{KEform}), the K\"ahler-Einstein metric is the curvature form associated with the anti-canonical bundle, $L_{\delta}$.

\section{Invariant $dd^C$-lemmas on homogeneous line bundles}\label{sec:ddbar}

This section is dedicated to the proof of Theorem \ref{kf1}. Section \ref{lefinv} discusses invariant $1$-forms on the unit circle bundle, $M$, of a homogeneous line bundle $L$ over a compact homogeneous K\"ahler manifold $X$. Section \ref{ss:diffof1form} calculates the differentials of the invariant 1-forms given in Section \ref{lefinv}. Assuming that $L$ is negative, Section \ref{ss:invddbar} proves an invariant $dd^C$-lemma, and we also show that this $dd^C$-lemma can be false if $L$ is not negative. Finally, Section \ref{ss:mainproof} combines these results to prove Theorem \ref{kf1}. 

\subsection{Invariant $1$-forms on $M$}
\label{lefinv}

Let $X$ be a homogeneous compact K\"ahler manifold as before, and let $L=L_\lambda$ be a homogeneous line bundle over $X$ given by an integral weight $\lambda$. Given the data $(X,L)$, in this subsection, we will determine all left-invariant 1-forms on the unit circle bundle $M$ of $X$. To the best of our knowledge, the main result here (Proposition \ref{leftvf1}) is new.

Recall the $G$-action on $L$ defined by restricting the $G^\CC$-action of (\ref{Gaction2}) to $G \subset G^\CC$. For each homogeneous line bundle $L$, there is a natural $G$-invariant hermitian metric $h$ induced by the standard hermitian metric in $\CC$. In particular, according to construction of homogeneous line bundle in (\ref{construcL}), let $q_0= \overline{(g,z)} \in L_\lambda$ for $(g,z)\in G^\CC \times \CC$, then 
$h(q_0,q_0)=|z|^2$.
The hermitian metric $h$ induces a radian function $r$ on $L$. Then, $G$ acts transitively on each level set of $r$, an $S^1$ bundle of $X$, denoted as $M(r)$. Away from zero level set, there is a canonical invariant vector field, $\partial/\partial r$ on $L$ pointing in radius direction. We shall find the set of all invariant vector fields on each level set $M(r)$. Notice that, in general, left-invariant vector fields on $G$ are not always well-defined over $M(r)$, as the left action by the stabilizer group on the tangent space at one point can be nontrivial. Let $M=M(1)$ and let $\T_M$ be the space of all global $G$-invariant vector fields over $M$. $\T_M$ always contains one element, $X_0=J(\partial /\partial r)$, generating a circle action on each fiber. The other elements of $\T_M$ strongly depend on the base manifold $X$ and the integral weight $\lambda$. According to
(\ref{tanspace1}), the tangent space at a distinguished point $p\in X$ can be identified with a subspace of $\gf$. Then, we can choose a normal adapted basis $\{X_\alpha,Y_\alpha\}_{\alpha \in D^+}$ as in (\ref{rootvector1}), (\ref{rootvector2}).

Based on the choice of $\{H_\alpha, X_\alpha, Y_\alpha\}$ as in (\ref{rootvector1}), (\ref{rootvector2}), one can easily check the following Lie algebra structure.
\begin{align*}
	&[H_\alpha, X_\alpha]=-\frac{|\alpha|^2}{2} Y_\alpha \\
	&[H_\alpha, Y_\alpha]= \frac{|\alpha|^2}{2} X_\alpha \\
	&[X_\alpha, Y_\alpha]= - H_\alpha
\end{align*}
Let $q$ be a distinguished point in $M$, then the tangent space at $q$ can be identified with $\RR\langle X_\alpha, Y_\alpha, \alpha\in D^+ \rangle\oplus \RR X_0$ in the following sense. Consider the $G$-equivariant bundle projections,
\begin{align*}
	\xymatrix{G\ar^{\tilde{\pi}}[r]\ar[dr]_\pi&\ M\ar[d]^{\pi |_M}\\ & X}
\end{align*}
with $\tilde{\pi}(e)=q\in M$. At the distinguished point $q\in M$ with $\pi(q)=p\in X$, assume that the stabilizer group at $p\in X$ is $R$ and the stabilizer group at $q\in M$ is $R_0$. By $G$-action on $L$, $g\in R_0$ if and only if,
$$g(q)= g(e, \theta)= (e, \chi^\lambda(g) \theta),$$ 
which implies that $R_0= \ker \chi^\lambda : R\rightarrow S^1 \subset \CC^*$ These projections induce the mapping on tangent spaces $\tilde{\pi}_*:\gf\rightarrow T_qM$ by
\begin{align}\label{vector1}
N\in \gf \mapsto \frac{d}{dt}\Big|_{t=0}\tilde{\pi}\circ\exp(tN).
\end{align}
Similarly, we can define the mapping $\pi_*:\gf\rightarrow T_p X$. By abusing notation, we write $X_\alpha, Y_\alpha$ instead of $\tilde{\pi}_*(X_\alpha)$, $\tilde{\pi}_*(Y_\alpha)$ and $\pi_*(X_\alpha),\ \pi_*(Y_\alpha)$ as $X_\alpha$, $Y_\alpha$. Notice that the left-invariant vector fields on $G$ can be identified with $\gf$. To determine the space of invariant vector fields, $\T_M$, on $M_1$, we observe that for any $R_0$-invariant vector $v\in T_q M$ and $g_1(q)=g_2(q)$, then $g_1=g_2 r$ with $r\in R_0$ and 
\begin{align*}
(g_1)_*(v)=(g_2 r)_*(v)=(g_2)_* r_*(v)=(g_2)_*(v),
\end{align*}
So each $R_0$-invariant vector of $T_q M$ determines a left-invariant vector fields on $M$. There is an one-to-one correspondence between $\T_M$ and the $R_0$-invariant space of $T_q M$. In particular, in the case that $X=G/T$ with $T$ a maximal torus of $G$, we have the following proposition	

\begin{pro}\label{leftvf1}
Let $X$ be a compact  K\"ahler manifold homogeneous under $G$ with stabilizer group $R$, let $\Delta$ be the root system of $(\tf, \gf)$. $D^+$ is defined as in (\ref{tanroot}). Let $M$ be the unit level of homogeneous line bundle determined by an integral weight $\lambda\ne 0$. At a distinguished point $q\in M$, the tangent space $T_q M$ is generated by $\{X_\alpha,Y_\alpha\}_{\alpha\in D^+}$ and $X_0$. Then, there are the following two possiblities for the space of the left-invariant vector fields on $M$,
\begin{enumerate}
	\item[(a)] $\T_M \cong \RR \langle X_0\rangle$
	\item[(b)] $\T_M \cong \RR \langle X_0, X_\alpha, Y_\alpha\rangle$, for some $\alpha \in D^+$
\end{enumerate}
The case \textup{(b)} happens if and only if $\lambda$ is proportional to $\alpha$ and $\alpha+\beta$, $\alpha-\beta$ are not in $\Delta$, for any $\beta\in \Delta(\tf, \rf)$. In particular, if the stabilizer group is a maximal torus, then the case (b) happens if and only if $\lambda$ is proportional to $\alpha$. 
\end{pro}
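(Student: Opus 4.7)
The plan is to use the bijection, established just above the statement, between $\T_M$ and the space of $R_0$-fixed vectors in $T_qM$, where $R_0 = \ker(\chi^\lambda\colon R \to S^1)$. Since $R_0$ acts trivially on the $S^1$-fibre through $q$, the vertical vector $X_0$ is always $R_0$-fixed, so the task reduces to identifying the $R_0$-invariants in the horizontal subspace, which the normalized adapted basis identifies with $T_pX = \sum_{\alpha \in D^+}\RR\langle X_\alpha, Y_\alpha\rangle$ via the linear isotropy representation of $R$.

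First I would analyse invariance under the subtorus $T_0 \subset R_0 \cap T$ with Lie algebra $\tf_0 := \ker(\lambda|_{\tf})$. Each 2-plane $\RR\langle X_\alpha, Y_\alpha\rangle$ is a real irreducible $T$-module of weight $\alpha$, hence is fixed by $T_0$ precisely when $\alpha|_{\tf_0} = 0$, equivalently when $\alpha$ is a non-zero scalar multiple of $\lambda$ as functionals on $\tf$. Because $D^+$ consists of positive roots and the root system of $\gf^\CC$ is reduced, at most one $\alpha \in D^+$ can satisfy this proportionality; so the $T_0$-fixed part of $T_pX$ is either $\{0\}$ or a single 2-plane $\RR\langle X_\alpha, Y_\alpha\rangle$.

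Next I would check when such a 2-plane is actually stabilized by the semisimple factor $[\rf,\rf] \subset \rf_0$. For any root $\beta \in \Delta(\tf, \rf)$ with root vector $E_\beta \in \rf^\CC$, one has $[E_\beta, E_{\pm\alpha}] \in \gf_{\pm\alpha+\beta}$; since $\pm 2\alpha \notin \Delta$ in a reduced root system, the 2-plane is $E_\beta$-invariant iff $[E_\beta, E_{\pm\alpha}] = 0$, equivalently $\alpha \pm \beta \notin \Delta$. Combined with the $T_0$-step this yields case~(b); otherwise only case~(a) survives. When $R = T$ the set $\Delta(\tf, \rf)$ is empty, so only the proportionality condition remains.

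The main subtlety to watch is that $R_0$ need not be connected, so $\rf_0$-invariance does not automatically upgrade to $R_0$-invariance. I would handle this by observing that $R$ itself is connected by Remark~\ref{simconnected}, that the $R$-action preserves the $T$-isotypic decomposition of $T_pX$, and that the 2-plane singled out above is the unique $T$-weight space of its weight; hence the full $R_0 \subset R$, including its finite component group, preserves $\RR\langle X_\alpha, Y_\alpha\rangle$ and acts trivially on it once its identity component does.
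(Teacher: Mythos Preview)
Your approach is essentially the paper's: reduce to $R_0$-fixed vectors in $T_qM$, first test invariance under the torus part $\tf_0=\ker(\lambda|_{\tf})$ to get the proportionality $\alpha\propto\lambda$, then test invariance under the root vectors $E_\beta$ with $\beta\in\Delta(\tf,\rf)$ to obtain the $\alpha\pm\beta\notin\Delta$ condition. The only cosmetic difference is that the paper uses $J$-invariance to pass from ``one vector in a $2$-plane is fixed'' to ``the whole $2$-plane is fixed'', while you use $T$-irreducibility of each $2$-plane; these are equivalent. Your extra observation that at most one $\alpha\in D^+$ can be proportional to $\lambda$ (by reducedness of the root system) is a point the paper leaves implicit.

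You go beyond the paper by flagging the possible disconnectedness of $R_0$; the paper's proof silently equates $\rf_0$-invariance with $R_0$-invariance and never addresses this. However, your proposed fix has a gap. First, the claim that the $R$-action preserves the $T$-isotypic decomposition of $T_pX$ is unjustified: $T$ is not normal in $R$, so $\mathrm{Ad}(r)$ for $r\in R\setminus N_R(T)$ need not send $\gf_\alpha$ to itself. Second, even if $R_0$ preserves the $2$-plane, the inference ``acts trivially once its identity component does'' is not argued and can fail. Concretely, $T_0=\ker(\chi^\lambda|_T)$ need not be connected, and a non-identity component of $T_0$ acts on $\RR\langle X_\alpha,Y_\alpha\rangle$ through the character $\chi^\alpha$; proportionality of $\alpha$ and $\lambda$ as linear functionals on $\tf$ forces $\alpha|_{\tf_0}=0$ but does \emph{not} force $\chi^\alpha|_{T_0}\equiv 1$. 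So the subtlety you identified is real and remains unresolved both in your sketch and in the paper's proof.
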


Before we prove Proposition \ref{leftvf1}, some typical examples will be investigated.

\begin{ex}\label{ex1} Let $X=\CC\PP^1$, $L=\OO(-1)$. In this case, $\OO(-1)\setminus \pi^{-1}(0)$ is biholomorphic to $\CC^2\setminus \{0\}$. The induced $SU(2)$ action on $\CC^2\setminus \{0\}$ coincides with the regular $SU(2)$ representation of $\CC^2$ and the invariant hermitian metric coincides with the standard hermitian metric of $\CC^2$. Therefore, the level set $M$ is isomorphic to $S^2$, then it's easy to find a basis of left-invariant vector fields as follows,
\begin{align*}
X_0&= -y_1\frac{\partial}{\partial x_1}+x_1 \frac{\partial}{\partial y_1}-y_2 \frac{\partial}{\partial x_2}+x_2\frac{\partial}{\partial y_2}  \\
X&=\ \ x_2\frac{\partial}{\partial x_1}- y_2\frac{\partial}{\partial y_1}-x_1 \frac{\partial}{\partial x_2}+ y_1\frac{\partial}{\partial y_2}\\
Y&=	-y_2\frac{\partial}{\partial x_1}-x_2 \frac{\partial}{\partial y_1}+y_1 \frac{\partial}{\partial x_2}+x_1\frac{\partial}{\partial y_2}.
\end{align*}
After removing the zero level, $\OO(-1)$ can be viewed as a covering space of $\OO(-n)$. Noting that the frame $\{X_0, X, Y\}$ is $S^1$-invariant, $\{X_0,X,Y\}$ induces a left-invariant frame on $\OO(-n)$. Thus, by coincidence, for negative line bundles over $\CC\PP^1$, their left-invariant vector fields can span the whole tangent space at each point of the level set $M$. This result does not hold in any other cases. We can understand this example in terms of Proposition \ref{leftvf1}. There is only one simple root for $SU(2)$ and all negative integral weights are proportional to the simple root. Hence, we have $\T_M \cong \RR\langle X_0, X, Y\rangle$
\end{ex}

\begin{ex} \label{exinvv2} Let $X=SU(3)/T^{2} $. Recall the basic notions of semisimple Lie group $SU(n)$. The Lie algebra $\suf(n)$ is the set of trace zero skew-hermitian matrices of order $n$. A Cartan sub-algebra $\tf$ is the Lie algebra of diagonal matrices in $\suf(N)$. In particular, in the case of $SU(3)$, A Cartan sub-algebra is generated by $t_1=\diag(i,0,-i)$ and $t_2=\diag(0,i,-i)$. The set of positive roots with respect to $\tf$ consists of three elements $\{\alpha,\beta,\gamma\}$, and the normal adapted basis is given as follows: 
\begin{align*}
	X_\alpha=C\begin{pmatrix}
	 0 & 1 & 0 \\
	 -1 &  0 & 0 \\
	 0  &  0  & 0 
     \end{pmatrix}, \qquad 
   Y_\alpha=C\begin{pmatrix} 0 & i & 0 \\ i & 0 &  0\\ 0 & 0 & 0
\end{pmatrix},\\
X_\beta=C\begin{pmatrix}
	 0 &  0 & 1 \\
	 0 &  0 & 0 \\
	 -1 &  0  & 0 
     \end{pmatrix}, \qquad 
   Y_\beta=C\begin{pmatrix} 0 & 0 & i \\ 0 & 0 & 0 \\i & 0 & 0
\end{pmatrix},\\
X_\gamma=C\begin{pmatrix}
	 0 & 0 & 0  \\
	 0 & 0 & 1  \\
	 0 & -1 & 0 
     \end{pmatrix}, \qquad 
   Y_\gamma=C\begin{pmatrix} 0 & 0 & 0 \\ 0 & 0 & i \\ 0 & i & 0
\end{pmatrix},
\end{align*}
where $C$ is a real coefficient to normalize $\ X_{\alpha,\beta,\gamma},\ Y_{\alpha,\beta,\gamma}$.  Consider a distinguished point $q\in M$ with tangent space generated by $\{X_0,\ X_{\alpha,\beta,\gamma},\ Y_{\alpha,\beta,\gamma}\}$. Now, we only focus on the subspace  $V\subset T_q M$ generated by $\{X_{\alpha,\beta, \gamma}, Y_{\alpha, \beta, \gamma}\}$. To simplify the notation in calculation, we introduce a complex coordinate system in $V$; precisely,  $z_{\alpha}=X_\alpha+i Y_{\alpha}$. Let $\sigma_i$ denote the $i$-th element of diagonal matrices. Then, $\alpha,\beta,\gamma$ can be expressed as follows,
\begin{align*}	
&\alpha=\sigma_1- \sigma_2 ,\quad
\beta=\sigma_1- \sigma_3,\quad
\gamma=\sigma_2-\sigma_3.
\end{align*}
Hence, let $\{\alpha,\gamma\}$ be the simple root system of $SU(3)$ and $\beta=\alpha+\gamma$. The fundamental weights are given as follows,
\begin{align*}
	\omega_1=\frac{2}{3}\sigma_1-\frac{1}{3} \sigma_2- \frac{1}{3}\sigma_3,\qquad \omega_2=\frac{1}{3}\sigma_1+\frac{1}{3} \sigma_2 -\frac{2}{3}\sigma_3.
\end{align*}
Let $\OO(p,q)$ denote the line bundle corresponding to the integral weight $\lambda=p\omega_1+q\omega_2$. In the case of $(p,q)\ne 0$. Noting that that kernel of $\lambda= p\omega_1+ q\omega_2$ is $\RR\langle q t_1 - (p+q) t_2\rangle$. Then, the $S^1$ action is given by $T_\theta^{p,q}=\diag(e^{iq \theta}, e^{-i(p+q)\theta},e^{ip\theta})$ and if we represent the action on complex coordinate system $(z_\alpha, z_\beta, z_\gamma)$, we have
\begin{align*}
T^{p,q}_\theta(z_\alpha,z_\beta,z_\gamma)=(e^{-i(p+2q)\theta}z_\alpha, e^{i(p-q)\theta} z_\beta, e^{i(2p+q)\theta}z_\gamma ).
\end{align*}
In conclusion, we have the following cases 
\begin{align*}
\begin{tabular}{c | c}
\hline
Conditions on $(p,q)$ & \quad Invariant vector fields over $M$ \quad\\
\hline
$p=-2q $ & $X_0,\quad  X_\alpha, \quad Y_\alpha$\\
\hline
$p=q $ & $X_0, \quad X_\beta, \quad Y_\beta$\\
\hline
$2p=-q $ & $X_0, \quad X_\gamma, \quad Y_\gamma$\\
\hline
Others & $X_0$\\
\hline
\end{tabular}
\end{align*}
Notice that 
\begin{align*}
    \alpha= 2\omega_1-\omega_2, \quad \beta= \omega_1+\omega_2,\quad \gamma= -\omega_1+2\omega_2
\end{align*}
The integral weights in the above table, $-2n\omega_1+ n\omega_2$, $n\omega_1 + n\omega_2$, $-n\omega_1  +2n \omega_2$ are proportional to $\alpha,\beta, \gamma$ respectively in agreement with Proposition \ref{leftvf1}.
\end{ex}

\begin{ex} Let $X=SU(4)/R$ where $R$ consists of all elements of the following type,
\begin{align*}
\begin{pmatrix}
	 * & * & 0 & 0 \\
	 * & * & 0 & 0 \\
	 0 & 0 & * & 0 \\
	 0 & 0 & 0 & *
     \end{pmatrix}.
\end{align*}
Let $\sigma_i$ denote the $i$-th element of matrices. Then the positive roots are given as 
\begin{align*}
\Delta^+=\{\sigma_i-\sigma_j,\ i < j\}.
\end{align*}
In particular, the simple roots are,
\begin{align*}
\lambda_i= \sigma_i - \sigma_{i+1},\qquad i=1,2,3.
\end{align*}
Then, the fundamental weights corresponding to $\{\lambda_i\}_{i=1,2,3}$ are the following, 
\begin{align*}
\omega_1= \frac{3}{4} \sigma_1 - \frac{1}{4} \sigma_2 -\frac{1}{4}\sigma_3 - \frac{1}{4} \sigma_4,\\
\omega_2= \frac{1}{2} \sigma_1 + \frac{1}{2} \sigma_2 - \frac{1}{2}\sigma_3 - \frac{1}{2} \sigma_4,\\
\omega_3= \frac{1}{4} \sigma_1 + \frac{1}{4} \sigma_2 + \frac{1}{4} \sigma_3 - \frac{3}{4} \sigma_4.
\end{align*}
According to theorem \ref{classlinebundle2}, each holomorphic line bundles over $X$ is related to a integral weight $\displaystyle\lambda(n,m)= \sum n\omega_1 + m\omega_2$. Let $L_{n,m}$ denote the line bundle related to $\lambda(n,m)$ and $M$, the circle bundle of $X$ induced by $L_{n,m}$.  Let $q$ be a distinguished point in $M$. If we write $X_{ij}, Y_{ij}$ as the normal adapted basis of root space of $\sigma_i-\sigma_j$, then the tangent space at $q$ is generated by $\{X_0, X_{13,14,23,24,34}, Y_{13,14,23,24,34}\}$. Let $z_{ij}= X_{ij} + iY_{ij}$. Notice that the circle $T_{12}(\theta)=\diag(e^{i\theta}, e^{-i\theta},1,1)$ is always in the kernel of $\chi^{\lambda(n,m)}$ for all $n,m\in \ZZ$; hence the circle stabilizes the point $q\in M$. One can check the $T_{12}(\theta)$-action on $T_{q}M$,
\begin{align*}
T_{12}(\theta)(z_{13},z_{14}, z_{23}, z_{24}, z_{34}) = (e^{i\theta} z_{13}, e^{i\theta}z_{14}, e^{-i\theta}z_{23}, e^{-i\theta}z_{24}, z_{34}).
\end{align*} 
Assume $m=-2n$, then the kernel of $\chi^\lambda$ is $R_0= CS$, where $C$ is the center of $R_0$ and can be represented as $T'(\theta)=\diag(e^{i\theta}, e^{i\theta}, e^{-i\theta}, e^{-i\theta})$. $S$ is the semisimple part of $R_0$, which is isomorphic to $SU(2)$. In particular,
\begin{align*}
S\cong \begin{pmatrix}
SU(2) & 0\\
0 & I_2
\end{pmatrix}
\end{align*}
It is easy to check that $R_0$ acts trivially on $X_{34}$ and $Y_{34}$. When $m\ne -2n$, by similar calculation (only the center of $R_0$ changes), we can show that there is no nontrivial invariant vectors in $T_q M$.
 
In conclusion, we have
\begin{align*}
\begin{tabular}{c | c}
\hline
Conditions on $(n,m)$ & \quad Invariant vector fields over $M$ \quad\\
\hline
$m=-2n $ & $X_0,\quad  X_{34}, \quad Y_{34}$\\
\hline
Others & $X_0$\\
\hline
\end{tabular}
\end{align*}
Also notice that $\lambda= n \omega_1 - 2n \omega_2= -n (\sigma_3-\sigma_4) = -n \lambda_3$ and $\lambda_3 \pm \lambda_1$ are not roots, again, in agreement with Proposition  \ref{leftvf1}.
\end{ex}

\begin{proof}[Proof of Proposition \ref{leftvf1}] 
	Let $R_0$ be the stabilizer group at $q\in M$. Notice that $R_0$ is the kernel of character $\chi^\lambda: R\rightarrow S^1\subset \CC$ related to the weight $\lambda$. Let $ V\in T_q M$ be an invariant vector. According to decomposition (\ref{tanspace1}) of $T_q M$, the vector $V$ can be written as
\begin{align*}
	V=\sum_{\alpha\in  D^+}V_\alpha, \qquad V_\alpha\in \RR\langle X_\alpha, Y_\alpha \rangle \text{ and } V_\alpha\ne 0.
\end{align*}	
Notice that $R_0$ preserves each $\RR\langle X_\alpha, Y_\alpha \rangle$. If a vector $V_\alpha = aX_\alpha+bY_\alpha\in E_{\pm \alpha}$ is invariant under $R_0$ action, then, since $J$ is $R_0$ invariant, which means that $J(aX_\alpha+bY_\alpha)=aY_\alpha-bX_\alpha$ is also $R_0$ invariant. Therefore, all vectors in the space generated by $\langle aX_\alpha+bY_\alpha,-b X_{\alpha}+a Y_\alpha\rangle=\langle X_\alpha, Y_\alpha\rangle = \RR\langle X_\alpha, Y_\alpha \rangle$   are $R_0$ invariant. So we only need to determine the set of all $\alpha \in \Delta^+$ such that $X_\alpha$, $Y_\alpha$ are invariant under $ R_0 = \ker \chi^\lambda$.

 Let $\rf_0=\ker\lambda$, it's easy to see that $\rf_0$ is the Lie algebra of the stabilizer group $R_0$. Let $T_0= R_0 \cap T$ associated with Lie algebra $\tf_{0}=\rf_0\cap \tf$. Since $X_\alpha$, $Y_\alpha$ is $T_0$ invariant,
\begin{align}\label{ker1}
	[H_0,X_\alpha]=0=[H_0,Y_\alpha],\qquad \text{for all } H_0\in \tf_0
\end{align}
Noting that
$$[H_0, X_\alpha]= -i\alpha(H_0) Y_\alpha, \qquad [H_0, Y_\alpha] = i \alpha(H_0) X_\alpha. $$
Hence, we have $\ker \alpha= \tf^0 = \ker \lambda|_{\tf}$, hence $\lambda$ is proportional to $\alpha$. 

Let $Z \in \rf_0$, then $X_\alpha,Y_\alpha$ are $R_0$ invariant if and only if 
\begin{align*}
 \tilde{\pi}_*[Z, X_\alpha]=\tilde{\pi}_*[Z,Y_\alpha]=0,	
\end{align*}
Notice that $\rf_0 = \tf_0 \oplus \sum_{\beta \in \Delta^+(\tf,\rf)}\RR\langle X_\beta, Y_\beta \rangle$, where $\tf_0$ is kernel of $\lambda$ restricted in $\tf$. Then,  $X_\alpha$ and $Y_\alpha$ is $R_0$-invariant is equivalent to (\ref{ker1}) and
\begin{align*}
	\tilde{\pi}_*[Z_\beta,X_\alpha]=\tilde{\pi}_*[Z_\beta,Y_\alpha]=0,&\qquad \text{for all } Z_\beta\in \RR\langle X_\beta, Y_\beta \rangle,\ \beta\in \Delta^+(\tf,\rf) \qquad (**)
\end{align*}
According to \cite[Theorem 6.6]{knapp2013lie} and the definition of $\tilde{\pi}_*$,  $(**)$ is equivalent to $\alpha + \beta,\ \alpha -\beta \notin D^+ \cup (-D^+)$. Indead, $\alpha+\beta$ and $\alpha-\beta$ are not roots. Assume that
$$ \alpha +\beta= \gamma \in \Delta(\tf,\rf), $$
then, $\alpha=\gamma - \beta \in \Delta(\tf,\rf)$. But $\alpha \in D^+$, which leads to a contradiction.
\end{proof}

The left-invariant $1$-forms on $M$ can be viewed as the dual space of left-invariant vector fields. More precisely, if we apply the previous notion of adapted basis at $T_q M$, given by $\{X_0, X_\alpha, Y_\alpha; \alpha\in D^+\}$, we write the dual basis of $T^*_qM$ as $\{\eta_0,\eta_\alpha, \xi_\alpha;\alpha\in D^+\}.$ According to Proposition \ref{leftvf1}, the space of left-invariant vector fields is in one-to-one correspondence with the space generated by $\{X_0\}$ or $\{X_0, X_\alpha,Y_\alpha\}$ for some $\alpha \in D^+$. Consider the subset, $\{\eta_0\}$ or $\{\eta_0,\eta_\alpha,\xi_\alpha \}$ of the dual basis, whose elements are $R_0$ invariant. Therefore, $\{\eta_0\}$ or $\{\eta_0,\eta_\alpha,\xi_\alpha \}$ generates the space of $G$-invariant $1$ forms over $M$.

\subsection{The differentials of left invariant 1-forms on $M$} \label{ss:diffof1form}

According to proposition \ref{leftvf1}, the left invariant 1-forms are generated by $\{\eta_0 \}$ or $\{\eta_0, \xi_\alpha, \eta_\alpha \}$. Let us only deal with the second case, because the differential of the first case follows directly from the calculation of the second one. 

Let $M$ be an $S^1$ bundle associated with line bundle $L$. Notice that there is a natural projection $\tilde{\pi}: G\rightarrow M$. If we write $\Omega_1$, $\Omega_2$ as the space of smooth $1$-forms and $2$-forms respectively, then we have the following commutative graph,
\begin{equation}\label{form1}
\begin{aligned}
\xymatrix{\Omega_1(M)\ar[d]_{d}\ar[r]^{\tilde{\pi}^*}& \Omega_1(G)\ar[d]^d\\ \Omega_2(M)\ar[r]^{\tilde{\pi}^*}& \Omega_2(G).
 }	
 \end{aligned}
\end{equation}
Since the left-invariant vector fields are globally generated in $G$, with a natural basis corresponding to $\{H_\alpha,X_\beta,Y_\beta:  \alpha\in\Pi,\beta\in \Delta \}$ and its dual basis $\{h_\alpha,\eta_\beta,\xi_\beta:\alpha\in \Pi,\beta\in \Delta\}$, then the pull back of $\eta_\alpha$, $\xi_\alpha$ in $\Omega_1(M)$ under $\tilde{\pi}$ are exactly $\eta_\alpha$, $\xi_\alpha$ in $\Omega_1(G)$. And the pull-back of $\eta_0$ is a certain combination of $h_\alpha$ determined by weight $\lambda$. Recall the Maurer-Cartan equations on $G$ with respect to the natural basis, 
\begin{align}\label{MC1}
	[H_\alpha,X_\beta]= -\frac{(\alpha,\beta)}{2} Y_\alpha,\quad 
	[H_\alpha,Y_\beta]= \frac{(\alpha,\beta)}{2} X_\alpha,\quad [X_\beta,Y_\beta]= H_\beta.
\end{align}
Besides the above equations, there are also some other nontrivial Lie brackets $[E_{\pm\alpha},E_{\pm \beta}]$, if $\alpha+\beta$ or $\alpha-\beta$ is a root, but those terms are quite messed, we will discuss them separately in the proof. Since Maurer-Cartan equations gives us the derivative of left-invariant $1$-form on $G$, combining the commutative graph (\ref{form1}),  we  have the derivative of left-invariant $1$ form on $M$.

\begin{pro} \label{der1}	Let $X$ be the homogeneous space and $M$, the $S^1$ bundle of $X$ associated with $\lambda$. Assuming that,  at the distinguished point $q\in M$, the space of left-invariant $1$-forms 
can be identified with the spaces generated by $\{\eta_0,\eta_\alpha,\xi_\alpha\}$, $\alpha \in D^+$. In this case, $\lambda$ is proportional to $\alpha$, assuming that $\alpha=-l\lambda$. then the derivative are given by
\begin{align}\label{form2}
	d\eta_0 &= - \frac{1}{2}\sum_{\alpha\in D^+}  (\lambda,\alpha) \eta_\alpha\wedge \xi_\alpha, \\
	 d\eta_\alpha &= -l \eta_0 \wedge \xi_\alpha-\frac{C^\alpha_{\beta,-\gamma}}{2}\sum_{\substack{\beta,\gamma\in D^+\\ \beta-\gamma=\alpha}} (  \eta_\beta\wedge \eta_\gamma+\xi_\beta\wedge \xi_\gamma )\nonumber
	\\& \qquad\qquad\qquad\qquad\qquad\qquad \label{form3}  -\frac{C^\alpha_{\beta,\gamma}}{2}\sum_{\substack{\beta,\gamma\in D^+\\ \beta+\gamma=\alpha}}(\eta_\beta\wedge\eta_\gamma-\xi_\beta\wedge\xi_{\gamma})
\end{align}
\begin{align}
	 d\xi_\alpha &= l \eta_0 \wedge \eta_\alpha + \frac{C^\alpha_{\beta,-\gamma}}{2}\sum_{\substack{\beta,\gamma\in D^+
	 \\ \beta-\gamma=\alpha}}(\eta_\beta\wedge \xi_\gamma-\xi_\beta \wedge \eta_\gamma )\nonumber	\\& \qquad\qquad\qquad\qquad\qquad\qquad \label{form4} -\frac{C^\alpha_{\beta,\gamma}}{2}\sum_{\substack{\beta,\gamma\in D^+\\ \beta+\gamma=\alpha}}(\eta_\beta\wedge\xi_\gamma+\xi_\beta\wedge\eta_{\gamma}).
\end{align}
where the coefficients $C^\alpha_{\beta,-\gamma}$ are the coefficients from Maureur-Cartan equation. Let $E_\alpha$, $E_\beta$, $E_{-\gamma}$ be the root vector of $\alpha$, $\beta$, $-\gamma$ satisfying (\ref{rootvector1}), then 
\begin{align*}
	[E_\beta,E_{-\gamma}]=C^\alpha_{\beta,-\gamma} E_{\alpha}
\end{align*}
\end{pro}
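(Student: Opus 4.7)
The plan is to use the commutative diagram (\ref{form1}) to transfer the computation from $M$ to $G$, where the Maurer--Cartan formula $d\omega(A,B)=-\omega([A,B])$ for left-invariant $A,B,\omega$ reduces everything to Lie algebra bookkeeping. On $G$ the left-invariant $1$-forms have the global dual basis $\{h_\beta,\eta_\gamma,\xi_\gamma\}$ to $\{H_\beta,X_\gamma,Y_\gamma\}$, and the Lie brackets of this basis are explicitly described by (\ref{MC1}) together with the relations $[E_\beta,E_{\pm\gamma}]=C^\alpha_{\beta,\pm\gamma}E_\alpha$ whenever $\beta\pm\gamma$ is a root.

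First I would identify the pullbacks $\tilde\pi^*\eta_0$, $\tilde\pi^*\eta_\alpha$, $\tilde\pi^*\xi_\alpha$. Since $\tilde\pi_*:\gf\to T_qM$ restricts to the identity on $\RR\langle X_\gamma,Y_\gamma\rangle$ for $\gamma\in D^+$, the pullbacks $\tilde\pi^*\eta_\alpha$ and $\tilde\pi^*\xi_\alpha$ coincide with the corresponding dual basis elements on $G$. For $H\in\tf$, $\tilde\pi_*(H)$ is the infinitesimal generator of $t\mapsto\exp(tH)\cdot q$, which lies in $\RR X_0$; since the fiber action is $\chi^\lambda(\exp tH)=\exp(t\lambda(H))$ and $\lambda(H)\in i\RR$, one obtains $\tilde\pi_*(H)=-i\lambda(H)\,X_0$, so $\tilde\pi^*\eta_0$ is the left-invariant $1$-form on $G$ corresponding to $-i\lambda|_\tf$.

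Second I would apply Maurer--Cartan case by case. For $d\eta_0$, the only brackets that produce an element of $\tf$ are $[X_\gamma,Y_\gamma]=-H_\gamma$, and using $\eta_0(H_\gamma)=-i\lambda(H_\gamma)=-(\lambda,\gamma)/2$ yields (\ref{form2}) immediately. For $d\eta_\alpha$ and $d\xi_\alpha$ the $\eta_0\wedge\xi_\alpha$ and $\eta_0\wedge\eta_\alpha$ terms come from the brackets $[H,X_\alpha]=-i\alpha(H)Y_\alpha$ and $[H,Y_\alpha]=i\alpha(H)X_\alpha$; using the relation $\alpha=-l\lambda$, the $\lambda$-factor hidden inside $\eta_0$ is converted into the coefficient $\pm l$. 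The remaining quadratic terms arise from $[E_\beta,E_{\pm\gamma}]=C^\alpha_{\beta,\pm\gamma}E_\alpha$ with $\beta\pm\gamma=\alpha$ and $\beta,\gamma\in D^+$; writing $E_\beta=X_\beta-iY_\beta$ and separating real and imaginary parts expresses these as explicit combinations of $[X_\beta,X_\gamma]$, $[X_\beta,Y_\gamma]$, $[Y_\beta,Y_\gamma]$ whose $X_\alpha$- and $Y_\alpha$-components carry the coefficients $C^\alpha_{\beta,\pm\gamma}$. Dualizing via $d\omega(A,B)=-\omega([A,B])$ then assembles (\ref{form3}) and (\ref{form4}).

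The main obstacle will be the careful sign and coefficient bookkeeping in this third step. One must systematically separate the contributions from $\beta+\gamma=\alpha$ and $\beta-\gamma=\alpha$, track how each of $[X_\beta,X_\gamma]$, $[X_\beta,Y_\gamma]$, $[Y_\beta,Y_\gamma]$ distributes into the $X_\alpha$ and $Y_\alpha$ directions, and handle the double-counting when summing over ordered pairs $(\beta,\gamma)$ using the antisymmetry $C^\alpha_{\beta,\gamma}=-C^\alpha_{\gamma,\beta}$ together with the reality constraints among $C^{\pm\alpha}_{\pm\beta,\pm\gamma}$ forced by $E_{-\alpha}=\overline{E_\alpha}$. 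Once this table of brackets is laid out, the asserted formulas follow by direct expansion.
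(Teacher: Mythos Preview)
Your proposal is correct and follows essentially the same route as the paper: pull back to $G$ via the commutative diagram (\ref{form1}), identify $\tilde\pi^*\eta_0$ with $-i\lambda$ on $\tf$, and then read off each coefficient of $d\eta_0$, $d\eta_\alpha$, $d\xi_\alpha$ from the Maurer--Cartan brackets (\ref{MC1}) and $[E_\beta,E_{\pm\gamma}]=C^\alpha_{\beta,\pm\gamma}E_\alpha$. The paper carries out exactly the case-by-case bookkeeping you outline, including the verification that the $\eta_0\wedge\xi_\alpha$ coefficient is independent of the choice of simple root $\alpha_i$ and equals $-l$ via $\alpha=-l\lambda$, and the argument that the roots $\beta,\gamma$ contributing to the cross terms lie in $D^+$.
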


\begin{proof}
Let $\{\omega_1, \cdots, \omega_k\}$ be fundamental integral weights on $X$ and $\lambda=\sum_{i}n_i\omega_i$. By definition of fundamental weights and our setting of $H_{\alpha_i}$,  we can evaluate $H_{\alpha_i}$ under $\lambda$.
\begin{align*}
\lambda(H_{\alpha_i})=\sum_{j}n_j\omega_j(H_{\alpha_i})
=\frac{1}{2i}\sum_jn_j(\omega_j,\alpha_i)&\\
=-i\frac{|\alpha_i|^2}{4}&\sum_j\frac{2n_j(\omega_j,\alpha_i)}{(\alpha_i,\alpha_i)}
=-i\frac{|\alpha_i|^2}{4} n_i
\end{align*}
By the definition of $L_\lambda$, at distinguished point $q\in M$ with $\tilde{\pi}(e)=q$, we have
\begin{align}\label{ker2}
	\tilde{\pi}_*(H_{\alpha_i})
	=\frac{d}{dt}\Big|_{t=0}\chi^\lambda(\exp (t H_{\alpha_{i}}))
	=\frac{d}{dt}\Big|_{t=0}\exp (t \lambda(H_{\alpha_{i}}) )=-\frac{|\alpha_i|^2}{4}n_i X_0.
\end{align}
For each $i$, 
\begin{align*}
	\tilde{\pi}^*(\eta_0)(H_{\alpha_i})=\eta_0(\tilde{\pi}_*(H_{\alpha_i}))=-\frac{|\alpha_i|^2}{4}n_i \eta_0(X_0)=-\frac{|\alpha_i|^2}{4}n_i.
\end{align*}  
Since the previous calculation shows that $\omega_i(H_{\alpha_j})=-i|\alpha_j|^2\delta_{ij}/4$, then the pullback of $\eta_0$ can be represented by $\tilde{\pi}^*\eta_0=-i\lambda$. To get the formula (\ref{form2}), notice that
\begin{align*}
	\tilde{\pi}^*d\eta_0(X_\alpha,Y_\alpha)=
	d\tilde{\pi}^*(\eta_0)(X_\alpha,Y_\alpha)
	&=\tilde{\pi}^*\eta_0([X_\alpha,Y_\alpha])\\
	&=- i \lambda({H_\alpha})
	 =- \frac{1}{2} (\lambda,\alpha).
\end{align*}

To get the formulas (\ref{form3}), (\ref{form4}), if we pull back both sides of formulas by $\tilde{\pi}$, 
\begin{align*}
	\tilde{\pi}^*(d\eta_\alpha)(H_{\alpha_i},Y_\alpha)
	&=-\tilde{\pi}^*\eta_\alpha([H_{\alpha_i},Y_\alpha])
	=-\frac{(\alpha,\alpha_i)}{2},\\
	\frac{2(\alpha,\alpha_i)}{n_i|\alpha_i|^2}\tilde{\pi}^*(\eta_0\wedge\xi_\alpha)(H_{\alpha_i},Y_\alpha)
	&={-i}\frac{2(\alpha, \alpha_i)}{n_i|\alpha_i|^2}\sum_i \lambda\wedge \xi_{\alpha}(H_{\alpha_i},Y_\alpha)
	=-\frac{(\alpha_i, \alpha)}{2}.
\end{align*}
Since $\tilde{\pi}^*$ is injective, $d\eta_\alpha$ admits the term $\displaystyle \frac{2(\alpha,\alpha_i)}{n_i|\alpha_i|^2} \eta_0\wedge \xi_\alpha$. We can show that the coefficient is independent of the index $i$ and related to the factor $l$. Notice that $\lambda$ is proportional to $\alpha$, $\alpha=-l\lambda$, then
\begin{align*}
	\frac{2(\alpha,\alpha_i)}{n_i |\alpha_i|^2}
	=\frac{2(-l\lambda,\alpha_i)}{n_i|\alpha_i|^2}
	=\frac{2(-l\sum_{j}n_j\omega_j,\alpha_i)}{n_i|\alpha_i|^2}=-l.
\end{align*}

To compute the remaining cross terms of $d\eta_\alpha$ and $d\xi_\alpha$, we shall understand the structure of Lie algebra. Notice that $d\eta_\alpha$ has a nonvanishing 2-form related with root $\beta,\ \gamma$ only if 
\begin{enumerate}
\item[(1)] $\alpha=\beta+\gamma$.  
\item[(2)] $\alpha=\beta-\gamma$.	
\end{enumerate}
Both in the case (1) and (2), we argue that $\beta,\gamma\in D^+$. For instance, in the case (1), assume that $\gamma \in \Delta(\tf,\rf)^+$, then we have $\beta= \alpha-\gamma$ is a root, which contradicts the condition in proposition \ref{leftvf1}.

In the case (1), notice that $\displaystyle [E_\beta, E\gamma]= C^\alpha_{\beta, \gamma} E_\alpha $ and $\displaystyle [E_\beta, \overline{E_\gamma}]$ has no $E_\alpha$ terms. Combining with the relation (\ref{rootvector1}), we have,
\begin{align*}
&\eta_\alpha([X_{\beta},X_{\gamma} ])=-\eta_\alpha([Y_\beta,Y_\gamma])=\frac{C^\alpha_{\beta,\gamma}}{2},\\
&\xi_{\alpha}([X_\beta, Y_\gamma])=\xi_{\alpha}([Y_\beta,  X_\gamma])=\frac{C^\alpha_{\beta,\gamma}}{2}.
\end{align*}
The above equation implies that 
\begin{align*}
\tilde{\pi}^*(d\eta_\alpha)(X_\beta, X_\gamma)=
- \pi^*\eta_\alpha([X_\beta,X_\gamma])&= 
-\frac{C^\alpha_{\beta,\gamma}}{2}\\
\tilde{\pi}^*(d\eta_\alpha)(Y_\beta, Y_\gamma)=
-\pi^*\eta_\alpha([Y_\beta,Y_\gamma])&=
\frac{C^\alpha_{\beta,\gamma}}{2}
\end{align*}
Hence, $d\eta_\alpha$ has the term $(C^\alpha_{\beta,\gamma}/2)(-\eta_\beta \wedge \eta_\gamma + \xi_\beta \wedge \xi_\gamma)$. Likewise, we can find the formulas for $d\eta_\alpha$ and $d\xi_\alpha$ as (\ref{form3}) and (\ref{form4}) and we completes the proof.
 \end{proof}

\subsection{Invariant $dd^c$-lemma on $L_\lambda$}\label{ss:invddbar}

Consider the data $(X,L_\lambda)$, a generalized flag variety $X$ of semisimple Lie group $G$ and $L_\lambda$, a nontrivial homogeneous line bundle related to integral weight $\lambda$. According to proposition \ref{leftvf1}, the space of left invariant vector fields on $M$ have two different cases (a) and (b). Suppose that $\lambda$ satisfies one of the following conditions,
\begin{enumerate}
\item[$\bullet$] The space of left invariant vector fields on $M$ satisfies case (a);
\item[$\bullet$] The space of left invariant vector fields on $M$ satisfies case (b). And $\lambda$ is proportional to some positive root $\alpha$ with $\lambda= -l \alpha$, $l>0$. 
\end{enumerate}
Then, we have the following invariant $dd^c$
 lemma.
\begin{pro}\label{ddbar3} Let $(X,L_\lambda)$ satisfy the conditions above. If $\omega$ is an $G$-invariant closed real $(1,1)$-form on $L$, $[\omega]=0$ , then there exists a $G$-invariant K\"ahler potential $\Phi \in \mathcal{C}^\infty (L_\lambda)$ such that.
\begin{equation}
\omega=dd^c \Phi.
\end{equation}
\end{pro}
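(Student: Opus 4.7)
The plan is to promote an arbitrary topological primitive of $\omega$ to a $G$-invariant one via averaging, then use the classification of $G$-invariant $1$-forms from Section \ref{lefinv} to reduce the construction of $\Phi$ to a finite-dimensional problem in the radial variable $r$.

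First, since $\omega$ is closed with $[\omega]=0$ in $H^2(L)$, there exists a smooth $1$-form $\gamma$ on $L$ with $d\gamma=\omega$. Averaging $\gamma$ over the compact group $G$ with respect to Haar measure produces a $G$-invariant primitive, which I continue to denote by $\gamma$; since $\omega$ itself is $G$-invariant, the averaged form still satisfies $d\gamma=\omega$.

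Second, identify $L\setminus 0$ with $M\times\RR^+$ via the radial function $r$ from the $G$-invariant hermitian metric. Any $G$-invariant $1$-form on $L\setminus 0$ decomposes as $A(r)\,dr+\gamma_0(r)$, where $\gamma_0(r)$ is, at each radius, a globally defined $G$-invariant $1$-form on $M(r)\cong M$. By Proposition \ref{leftvf1} this space is spanned by $\eta_0$ in case (a) and by $\eta_0,\eta_\alpha,\xi_\alpha$ in case (b). Hence
\begin{align*}
\gamma=A(r)\,dr+B(r)\,\eta_0 \quad\text{in case (a)},
\end{align*}
and
\begin{align*}
\gamma=A(r)\,dr+B(r)\,\eta_0+C(r)\,\eta_\alpha+D(r)\,\xi_\alpha \quad\text{in case (b)},
\end{align*}
with smooth coefficients of $r\in \RR^+$.

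The third step is to substitute these expansions into $d\gamma=\omega$, expand using the formulas of Proposition \ref{der1}, and impose that $\omega$ is of type $(1,1)$. The vanishing of the $(2,0)$- and $(0,2)$-components of $d\gamma$ yields algebraic identities among $A,B,C,D$ and their first derivatives; in case (b) these couple $C$ and $D$, and the coupling is nondegenerate precisely because $\lambda=-l\alpha$ with $l>0$, which makes the coefficients of $\eta_0\wedge\eta_\alpha$ and $\eta_0\wedge\xi_\alpha$ coming from $d\eta_\alpha,d\xi_\alpha$ nonzero and of matching sign. Solving these relations and integrating in $r$ produces an explicit $G$-invariant candidate $\Phi$ on $L\setminus 0$ with $dd^c\Phi=\omega$: in case (a) it is obtained by quadrature as $\Phi=\varphi(r)$, and in case (b) one gets additional, still $G$-invariant, contributions driven by $C$ and $D$.

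The main obstacle is smoothness across the zero section $X=\{r=0\}\subset L$. Because $\omega$ is smooth at $r=0$, the coefficients $A,B,C,D$ (together with those of $\omega$) must fit an expansion in the holomorphic fiber coordinate $w$ with $|w|=r$, not merely be smooth in $r$. The hypothesis $l>0$ in case (b), respectively the automatic sign constraint in case (a), is exactly what forces the candidate $\Phi$ constructed above to extend smoothly over the zero section. This is precisely where the negativity of $L_\lambda$ enters, and it is where the counterexamples mentioned in this subsection (for line bundles of the wrong sign) will break down. Once the smooth extension is established, $G$-invariance of $\Phi$ is inherited from the invariance of $\gamma$, completing the proof.
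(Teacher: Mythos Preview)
Your outline tracks the paper's argument through the first two steps (averaging to get an invariant primitive, then decomposing via Proposition~\ref{leftvf1}), but there is a real gap at step three in case~(b). A $G$-invariant smooth function on $L$ is necessarily a function of $r$ alone, because $G$ acts transitively on each level set $M(r)$. Consequently $dd^c\Phi$ always has the restricted shape
\[
dd^c\Phi \;=\; r\varphi_0'(r)\,\mu\wedge\eta_0 \;-\; \tfrac{1}{2}\varphi_0(r)\sum_{\beta\in D^+}(\lambda,\beta)\,\eta_\beta\wedge\xi_\beta,\qquad \varphi_0 = r\Phi',
\]
and in particular contains no terms of type $\mu\wedge\eta_\alpha$, $\mu\wedge\xi_\alpha$, $\eta_0\wedge\eta_\alpha$, $\eta_0\wedge\xi_\alpha$, nor the cross terms of (\ref{coreform3})--(\ref{coreform4}). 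So there are no ``additional, still $G$-invariant, contributions driven by $C$ and $D$'' available: if $C$ or $D$ is nonzero, $d\gamma$ simply is not $dd^c\Phi$ for any invariant $\Phi$. The heart of the proof is therefore to show $C\equiv D\equiv 0$ \emph{before} constructing $\Phi$, and this is where the genuine work lies.

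The paper does this in two sub-cases. Imposing $J\,d\gamma=d\gamma$ yields the ODEs $rC'=-lC$, $rD'=-lD$ (solutions $C,D\propto r^{-l}$) and forces the line (\ref{coreform4}) to vanish; if $\alpha$ is not simple, some structure constant $C^\alpha_{\beta,\gamma}\neq 0$ and one gets $C=D=0$ for free. If $\alpha$ is simple, one uses that $\omega$ itself --- not $\Phi$ --- is smooth across the zero section: the paper evaluates $\int_{\{r\le\delta\}}\omega_0^{\,n-1}\wedge\omega\wedge\omega$ and shows the $C,D$ contribution is a negative multiple of $\int_0^\delta r^{-2l-1}\,dr$, which diverges precisely when $l>0$, forcing the constants to zero. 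Your sketch omits this step, and it cannot be replaced by ``$\Phi$ extends smoothly,'' since at that stage no $\Phi$ exists. (When $l<0$ the solutions $r^{-l}$ are smooth at $r=0$ and survive, which is exactly the mechanism behind the counterexample following the proposition.)
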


\begin{proof}
Since $\omega$ is exact, there exists an 1-form $\theta$ such that $d\theta=\omega$. Moreover $\theta$ can choose to be $G$ invariant. Notice that $\omega = d (g^* \theta)$, then, by taking integral over $G$, we obtain a $G$-invariant 1-form $\theta$ with $\omega=d \theta$.

The main idea of proof is to represent $d\theta$ and $\partial\overline{\partial}\phi$ with respect to $G$-invariant coframe, then we can reduce the proof of Proposition \ref{ddbar3} to solving a system of ODE.

Let $L_\lambda$ be the line bundle such that the space of left invariant vector fields on $M$ satisfies case (b). Suppose that $\lambda$ is proportional to $\alpha$. The basis of left-invariant $1$-form on level set $M$ is $\{\eta_0,\eta_\alpha,\xi_\alpha\}$. Then, we can extend the these invariant $1$-forms in radian direction by rescaling $1/r$ on each level $M(r)$. Precisely, There is a natural projection $p: L^\times \rightarrow M$. By identifying $L^\times \cong G\times_{\lambda}\CC^\times$, the projection can be written explicitly,
$$p({(g, z)}) = {(g, |z|^{-1}z)}.$$
Thus, $p^*$ extends $\{\eta_0, \eta_\alpha, \xi_\alpha \}$ to $L^\times$. By abusing the notion, we write $\{\eta_0, \eta_\alpha, \xi_\alpha\}$ as the extended vector fields over $L^{\times}$. Also let $\mu=  dr/r$, hence $\{\mu,\eta_0,\eta_\alpha,\xi_\alpha\}$ forms a basis of invariant $1$-forms over $L^\times$. Then the invariant $1$-form $\theta$ can be represented as
 \begin{align} \label{Form1}
 \theta=\varphi_r(r)\mu+\varphi_0 (r) \eta_0 + \varphi_\alpha(r) \eta_\alpha+ \phi_\alpha(r) \xi_\alpha. 
 \end{align}  

Applying Proposition \ref{der1}, we can take derivative of $\theta$ in (\ref{Form1}),
\begin{align}
	d\theta =& \ r\varphi_0'(r)\mu\wedge\eta_0	
	-\frac{\varphi_0(r)}{2}\sum_{\alpha \in D^+}  (\lambda,\alpha)\eta_\alpha\wedge\xi_\alpha \label{coreform1}\\
	& +r\varphi_\alpha'(r)\mu\wedge \eta_\alpha+ r\phi_\alpha'(r) \mu\wedge \xi_\alpha 
	-{l}\varphi_\alpha  \eta_0\wedge\xi_\alpha
	+{l}\phi_\alpha  \eta_0\wedge\eta_\alpha \label{coreform2}\\
	& -\varphi_\alpha \frac{C^\alpha_{\beta,-\gamma}}{2}\sum(\eta_\beta\wedge\eta_\gamma+\xi_\beta\wedge \xi_\gamma)+\phi_\alpha\frac{C^\alpha_{\beta,-\gamma}}{2} \sum(\eta_\beta\wedge \xi_\gamma-\xi_\beta\wedge \eta_\gamma) \label{coreform3}\\
   & -\varphi_\alpha \frac{C^\alpha_{\beta,\gamma}}{2}\sum(\eta_\beta\wedge\eta_\gamma-\xi_\beta\wedge \xi_\gamma)
   -\phi_\alpha\frac{C^\alpha_{\beta,\gamma}}{2} \sum(\eta_\beta\wedge \xi_\gamma+\xi_\beta\wedge \eta_\gamma) \label{coreform4}
\end{align}
Let $J$ be complex structure. Note that $d\theta$ is real $(1,1)$-form if and only if $Jd\theta=d\theta$. Notice that $J\eta_0=\mu$, $J\xi_\alpha=\eta_\alpha$, then  
\begin{align*}
	Jd\theta=& \ r \varphi'_0(r)\mu\wedge \eta_0 - \frac{\varphi_0(r)}{2} \sum_{\alpha\in D^+}(\lambda,\alpha)\eta_\alpha\wedge\xi_\alpha \\
	&+ r\varphi'_\alpha\eta_0\wedge \xi_\alpha- r\phi'_\alpha \eta_0\wedge \eta_\alpha -{l}\varphi_\alpha\mu\wedge \eta_\alpha-{l}\phi_\alpha\mu\wedge\xi_\alpha\\
	& -\varphi_\alpha \frac{C^\alpha_{\beta,-\gamma}}{2}\sum(\eta_\beta\wedge\eta_\gamma+\xi_\beta\wedge \xi_\gamma)+\phi_\alpha\frac{C^\alpha_{\beta,-\gamma}}{2} \sum(\eta_\beta\wedge \xi_\gamma-\xi_\beta\wedge \eta_\gamma) \\
   & +\varphi_\alpha \frac{C^\alpha_{\beta,\gamma}}{2}\sum(\eta_\beta\wedge\eta_\gamma-\xi_\beta\wedge \xi_\gamma)
   +\phi_\alpha\frac{C^\alpha_{\beta,\gamma}}{2} \sum(\eta_\beta\wedge \xi_\gamma+\xi_\beta\wedge \eta_\gamma) 
\end{align*}
Then, $Jd\theta=d\theta$ implies the following ODE
\begin{align}\label{ODE1}	
r\varphi_\alpha'=-l\varphi_\alpha,\qquad r\phi'_\alpha=-l\phi_\alpha 
\end{align} 
and the terms of line (\ref{coreform4}) are vanishing. To ensure the (\ref{coreform4}) vanishes, 
\begin{align}\label{con1}
C^\alpha_{\beta, \gamma}=0 \qquad \text{ or }\qquad \varphi_\alpha=\phi_\alpha=0.	
\end{align}
Indeed, referring to (\cite{knapp2013lie}, Theorem 6.6), if $\alpha=\beta+\gamma$, for some $\alpha,\beta,\gamma\in \Delta$, then the corresponding constant $C^\alpha_{\beta,\gamma}\ne 0$. Assuming that there exist positive roots, $\beta$ and $\gamma$, satisfying $\alpha=\beta+\gamma$,  by (\ref{con1}), we have $\varphi_\alpha=\phi_\varphi=0$, which automatically satisfies (\ref{ODE1}); hence, if $d\theta$ is a real $(1,1)$ form with some structure constants $C^\alpha_{\beta,\gamma}$ nonvanishing, then $d\theta$ can be written as 
\begin{align*}
	d \theta=
	r\varphi_0'(r)\mu\wedge \eta_0
	-\frac{\varphi_0(r)}{2}\sum_{\alpha\in D^+} (\lambda,\alpha)\eta_\alpha \wedge \xi_\alpha
\end{align*}

When it comes to the cases that the corresponding weight $\lambda$ is proportional to a simple root $\alpha$, i.e. $\alpha$ cannot be written as the sum of two positive roots,  $d\theta$ should satisfy the equation (\ref{ODE1}) and its solution is given by $\varphi=C/r^l$ with $C$ an arbitrary constant. In the sequel, it suffices to show that the constant $C$ in the expression of solution $\varphi$ should equal zero. To prove this, we need to apply the condition that the form, $d\theta$, is well-defined across the zero level of line bundle. Firstly, we take a reference metric $\omega_0$ near the zero level of line bundle $L$. Let $h$ be the canonical invariant hermitian metric defined as before and $r$ be the radial function related to $h$. Also, given a bundle coordinate $u$, we have
\begin{align*}
\omega_\epsilon=
&\pi^*\omega_{KE}+\epsilon\cdot  dd^c r^{{2}}\\	
=& \pi^*\omega_{KE}
+{\epsilon}r^{{2}}\cdot dd^c \log h
+ {\epsilon}h\cdot i du\wedge d\overline{u}.
\end{align*}
It is easy to see that $\omega_\epsilon$ is positive around zero level. To simplify calculation, let $\epsilon$ tends to $0$, then we obtain a semi-positive form $\omega_0=\pi^* \omega_{KE}$. Consider the following integration,
\begin{align}\label{int1}
\int_K \omega_0^{n-1}\wedge d\theta\wedge d\theta,
\end{align}
where the $K$ is a compact neighborhood of zero level, which is defined as $K=\{x\in L, \ r(x) \leq \delta \}$. In the sequel, we write $(\ref{coreform1})$, $(\ref{coreform2})$ and $(\ref{coreform3})$ as $\Theta_1$, $\Theta_2$, $\Theta _3$. Noticing that all crossing terms $\Theta_i\wedge \Theta_j\wedge \omega_0^{n-1}=0$, ($i\ne j$), and $\Theta_3\wedge \Theta_3\wedge \omega_0^{n-1}=0$. Only two nonvanishing terms of the  integration (\ref{int1}) are the following,
\begin{align}\label{int2}
	\int_K\omega_0^{n-1}\wedge d\theta\wedge d\theta 
	=\int_K \omega_0^{n-1} \wedge\Theta_1\wedge \Theta_1
	+\int_K \omega_0^{n-1} \wedge \Theta_2\wedge \Theta_2
\end{align}
Inserting the solution of ODE (\ref{ODE1}), 
\begin{align*}
\Theta_2=
-C_1 l r^{-l} \mu \wedge \eta_\alpha
-C_2 l r^{-l}	 \mu \wedge \xi_\alpha
-C_1 l r^{-l} \eta_0 \wedge \xi_\alpha
+C_2 l r^{-l} \eta_0  \wedge \eta_\alpha,
\end{align*}
then we can compute the two terms in (\ref{int2}) separately,
\begin{align*}
\int_K \Theta_2 \wedge \Theta_2 \wedge \omega_0^{n-1} & =
-\int_K (C_1^2 +C_2^2) l^2 r^{-2l} \mu \wedge \eta_0 \wedge \eta_\alpha \wedge \xi_\alpha \wedge \omega_0^{n-1}\\
& = -(C_1^2 +C_2^2 ) l^2 \int_0^\delta r^{-2l-1} dr \int_{M_1}\eta_0  \wedge \eta_\alpha \wedge \xi_\alpha \wedge \omega_0^{n-1} 
= -\infty
\end{align*}
and assume $\lambda$ is proportional to a positive root by a negative constant, $\sum_{\alpha\in \Delta^+}(\lambda, \alpha)\leq 0$
\begin{align*}
\int_K \Theta_1 \wedge \Theta_1 \wedge \omega_0^{n-1}
&=-\frac{1}{2}\sum_{\alpha\in \Delta^*} \int_K r\varphi_0'(r) \varphi_0(r)  (\lambda,\alpha) \eta_\alpha \wedge \xi_\alpha \wedge \mu \wedge \eta_0 \wedge \omega_0^{n-1} \\
&=-\frac{1}{2}\sum_{\alpha\in \Delta^+}(\lambda, \alpha)\int^{\delta}_0 \varphi_0'(r) \varphi_0(r) dr \int_{M_1} \eta_0 \wedge \eta_\alpha \wedge \xi_\alpha \wedge \omega_0^{n-1}\\
&=C\lim_{\epsilon\rightarrow} (\varphi_0(r))^2\Big|_{\epsilon}^\delta< C', \quad \text{ ($C$, $C'$ are nonnegative constants)} 
\end{align*}
Hence, $\displaystyle, \int_K d\theta \wedge d\theta \wedge \omega_0^{n-1}=-\infty$, which leads to a contradiction. We obtain that $\varphi_\alpha=\phi_\alpha=0$. According to the discussion of two cases, we have $\varphi_\alpha=\phi_{\alpha}=0$ and $d\theta$ can be represented as,
\begin{align} \label{dform1}
d\theta=
r\varphi_0'(r)\mu\wedge \eta_0
- \frac{\varphi_0(r)}{2}\sum_{\alpha\in \Delta^+}(\lambda, \alpha)\eta_\alpha \wedge \xi_\alpha	
\end{align}
Take a function $\Phi(r)$ such that $\Phi'(r)=\varphi_0(r)/r$, we compute $i\partial\overline{\partial} \Phi$,  
\begin{align*}	
	dd^c \Phi
	=-d\cdot Jd \Phi
	=-d\cdot J(\varphi_0(r)\mu)
	=d(\varphi_0(r)\eta_0)
	= r \varphi'_0(r) \mu \wedge \eta_0+\varphi_0 (r) d\eta_0.
\end{align*}
Combining with Proposition \ref{der1}, we obtain
\begin{align}\label{dform2}
dd^c \Phi 
= r \varphi_0'(r)\mu \wedge \eta_0
- \frac{\varphi_0 (r)}{2}\sum_{\alpha \in D^+}(\lambda, \alpha)\eta_\alpha \wedge \xi_\alpha	
\end{align}
Now, we find $\Phi \in C^\infty (L^\times)$ such that $\omega = dd^c \Phi$. It suffices to prove that $\Phi$ can extend smoothly across the zero level of $L$. To prove this, we need to apply a basic fact from complex functions: Let $f: [0,\infty) \rightarrow \RR$ be a smooth function, then,  $g(z)=f(|z|)$ is smooth in $\CC$ if and only if there exits a smooth function $h: [0,\infty) \rightarrow \RR$ such that $f(r)=h(r^2)$. Since $\omega$ is defined in the whole line bundle $L$, on each fibre, the terms of $\omega$,
\begin{align} \label{termsacross0}
r\varphi'_0(r) \mu\wedge\eta_0, \qquad \varphi_0(r) \eta_\alpha\wedge\xi_\alpha
\end{align}
can extend smoothly across the zero level. Notice that $\mu \wedge \eta_0 = Cr^{-2} du \wedge d\overline{u} $, where $u$ denotes the fiber coordinate of $L$. Then, it is easy to see that the terms in (\ref{termsacross0}) is smooth on each fiber if and only if $r^{-1}\varphi'(r)$ and $\varphi(r)$ are smooth on $\CC$ if and only if there exists a smooth function $h: [0,\infty) \rightarrow \RR$ such that $h(r^2)=\varphi(r)$. According to the definition of $\Phi$, we expand $\Phi$ near $0$.
\begin{align*}
\Phi'(r) = h(r^2) /r =C_{-1}/r +C_1 r + C_2 r^3 + \ldots.
\end{align*}
we have 
\begin{align*}
\Phi(r) = C_{-1} \log r + C_0 +C_1 r^2 +C_2 r^4 +\ldots.
\end{align*}
We claim that $C_{-1}$ is vanishing. Otherwise, $\varphi(0)\ne 0$, which implies that $\varphi_0 (r) \eta_0$ is not well defined on the zero level. Recalling the expression of $\theta$, this contradicts against the fact that $\theta$ is well defined on $L$. Therefore, we find a global K\"ahler potential for $\omega$, which completes the proof. 
\end{proof}

\begin{rem} The invariant $\partial\overline{\partial}$-lemma does not hold for all line bundles. For instance, let $\alpha$ be a simple root with $\alpha=2 \lambda$, for instance, the line bundle $\OO(1)\rightarrow \CC\PP^1$. Consider the following invariant 1-form
\begin{align*}
\theta = r^2 \eta_0 + r^2 \eta_\alpha +r^2 \xi_\alpha.
\end{align*}
Then, we can check that $d\theta$ is an invariant exact real (1,1) form. However, comparing with (\ref{dform2}), there is no potential function for this form.
\end{rem}

\subsection{Proof of Theorem \ref{kf1}}\label{ss:mainproof}

Firstly, it is easy see that the invariant $dd^c$ lemma can be applied when $L$ is a negative line bundle over $X$. Notice that the homogeneous line bundle $L$ can shrink to the base manifold $X$. Furthermore, we can require the shrinking process to be $G$-equivalent. In particular, we have the following isomorphism between cohomology groups. 
$$p^*: \ H^{1,1}_{G}(X) \ \cong \ H^{1,1}_{G} (L),$$
which implies that all invariant K\"ahler classes of $L$ arise from the invariant K\"ahler classes of $X$. In each invariant K\"ahler class of $X$, there exists exactly one invariant K\"ahler form, which follows directly from $dd^c$-lemma on $X$. Given an invariant K\"ahler form $\omega$ on $L$, by previous discussion, there exists an invariant K\"ahler form $\omega_X$ on $X$ such that $[\omega]=p^*[\omega_X]$  then, by Propositions \ref{ddbar3}, there exists a smooth function $\Phi$ defined on $L$ such that,
\begin{align*}
\omega= p^* \omega_X + dd^c \Phi.
\end{align*}
Hence, we complete the proof of theorem A.

\section{Momentum profiles and the classification of invariant cscK metrics}\label{sec:mom}

This section is dedicated to the proof of Theorem \ref{main}. Our main tool of this section comes from \cite{hwang2002momentum}, in which the momentum construction is applied to investigate the Calabi ansatz. In Section \ref{calabisec}, we will build up momentum construction by tracking the idea in \cite{hwang2002momentum}. Then, momentum construction will be applied in Section \ref{calabisec1} to get the existence of $G$-invariant constant scalar curvature K\"ahler metric. We also prove uniqueness and determine the asymptotics of these metrics. This proves Theorem \ref{main}.

\subsection{Momentum construction of Calabi ansatz} \label{calabisec}
Let $(X,\omega_X)$ be a K\"ahler manifold and $p:(L,h)\rightarrow (X, \omega_X)$ be a holomorphic line bundle of $M$ with Hermitian metric $h$. Let $t$ be the logarithm of fibre norm function related to $h$; i.e., given a local line bundle coordinate chart $(u,v)$, where $u$ represents the fibre coordinate, $t=\log [r(u)^2]:=\log h(u,\overline{u})$. Then, K\"ahler metrics arise from Calabi Ansatz is given by
\begin{equation}\label{calabime}
\omega=p^*\omega_X+\frac{1}{2}dd^c f(t),
\end{equation}
where $f$ is a smooth function of one real variable. Actually, according to theorem A, all invariant K\"ahler metrics comes from Calabi ansatz (\ref{calabime}). 

It is well-known that the problem of prescribed scalar curvature is equivalent to a fourth order PDE of potential function, specially, in this case, a fourth order ODE. However, A.D. Hwang and M.A. Singer comes up the method of momentum profile in \cite{hwang2002momentum} by which it can be reduced to be a second order ODE, as the curvature formula related to momentum profile is of second order. In the following, we study the momentum profile associated with the given original data. 

The K\"ahler metric, $\omega$, arising from Calabi ansatz (\ref{calabime}) may not exist in the whole line bundle $L$. For instance, in some cases, it might be blowing up in a finite domain with respect to fibre coordinate. In the sequel, we use $L'$ to denote the possible existence region of $L$ for $\omega$.  Notice that K\"ahler metric $\omega$  constructed in (\ref{calabime}) admits a natural Killing field $X_0$, which generates a circle action on each fibre and can be written in local bundle coordinates,
\begin{equation}\label{killing}X_0=2\im (\overline{u}\frac{\partial}{\partial\overline{u}})=r J\frac{\partial}{\partial r}.\end{equation} In point view of symplectic geometry, $X$ generates a Hamiltonian action on $(L,\omega)$ by
\begin{equation}\label{momeq}i_{X_0}\omega=-d\tau.\end{equation}
At each point on $p \in X$, there exists a coordinate chart around $p$ such that $\partial\log h |_p=\overline{\partial} \log h|_p=0$ and we call this a canonical chart of hermitian metric at $p$.  By computing $\omega$ at each point in a canonical chart,
\begin{equation}\label{calabime2}
\omega=p^*\omega_X+f'(t)\frac{1}{2}dd^c\log h+f''(t)i\frac{du\wedge d\overline{u}}{|u|^2}
\end{equation}
and inserting (\ref{calabime2}) and (\ref{killing}) into (\ref{momeq}), we have $\tau=f'(t)$. Let the interval $I$ be the image of moment map $\tau$. Noting that $||X||_\omega$ is a constant along each level of $\tau$, we can define the function $\varphi: I\rightarrow \RR_{\geq 0}$ by factoring through $\tau$,
\begin{equation*}\varphi(\tau)=\frac{1}{2}||X_0(\tau)||^2_{\omega}.\end{equation*}
The interval $I$ together with the function $\varphi$ is called \textit{momentum profile} related to $(L',\omega)$. The essential relation between $\varphi(\tau)$ and the potential $f$ is given by,
\begin{equation}\label{legendra}\varphi(\tau)=\frac{1}{2}\omega(X_0, JX_0)=-\frac{1}{2}JX_0(f'(t))=-\frac{1}{2}f''(t) \cdot JX_0(\log r^2)=f''(t)\end{equation}

Also by observing (\ref{calabime2}), the positivity of K\"ahler form implies the following two things: $f$ is a convex function; hence the moment map $\tau=f'(t)$ induces a Legendre transformation from $t$ to $\tau$. Moreover, if we denote $\gamma=-i\partial\overline{\partial} \log h$, the positivity of $\omega$ also requires $\omega-\tau \gamma$ to be positive. An interval, $I$, is defined to be a \textit{momentum interval} if for all $\tau\in I$, $\omega(\tau)=\omega-\tau \gamma$ is positive. In the following, we shall reconstruct $(L',\omega)$ by momentum profile $(I,\varphi)$ with a momentum interval $I$  and $\varphi: I\rightarrow \RR_{\geq 0}$.  

Based on the inverse Legendre  transformation, we can rebuild the K\"ahler metric $\omega$ explicitly by momentum profile $(I, \varphi)$ as follows. Let $(0,b)$ be the interior of $I$ with $b\leq \infty$ and fix $\tau_0\in I$
\begin{enumerate}
\item[(a)] {Fibre domain:} 
Let $T$ be the defining domain of $f(t)$ with $T^\circ =(t_1,t_2)$, then,
\begin{align*}
 t_1=\lim_{\tau\rightarrow 0+}\int_{\tau_0}^\tau \frac{dx}{\varphi(x)}\quad \text{ and }\quad  t_2=\lim_{\tau\rightarrow b-}\int^\tau_{\tau_0}\frac{dx}{\varphi(x)}.
\end{align*}
\item[(b)] {Potential function:} noting that $t$ and $\tau$ are related by $\displaystyle t=\int^{\tau(t)}_{\tau_0} \frac{dx}{\varphi}$, then $f(t)$ is given by data $(I,\varphi)$
\begin{align*} 
f(t)=\int^{\tau(t)}_{\tau_0}\frac{xdx}{\varphi(x)}.
\end{align*}
\item [(c)]{Fibre metric:} The metric $\omega$ induces the metric on each fibre in terms of coordinate $u$, $\displaystyle g_{\text{fibr}e}$ and the $\omega$-distance between $\tau_0$ level and $\tau(t)$ level, $s(t)$
\begin{align*}
g_{\text{fibre}}=\varphi(\tau)\bigg|\frac{du}{u}\bigg|^2, \qquad  s(t)=\int_{\tau_0}^{\tau(t)}\frac{dx}{2\sqrt{\varphi(x)}}.
\end{align*}
\end{enumerate}
where the formulas in (b), (c) can be obtained by change the variable though Legendre transformation. In  Summary, the relationship between the K\"ahler metric data and the momentum profile can be shown in the following commutative graph. Let the momentum profile $(\varphi,I)$ determine a K\"ahler metric $\omega$ in (\ref{legendra}) on subbundle $L'\subset L$, 
\begin{align*}
\xymatrix{&&L'\ar^{\ t}_{\text{log-Hermitian metric}\quad \ }[lld]\ar_\tau^{\quad\text{moment map}}[rrd]&&\\T\ar^{f'}_{\text{Legendra transformation}}[rrrr]\ar_{f''}[rrd]&&&&I\ar^\varphi[lld]\\&& \RR_{\geq 0}}
\end{align*}

The next step is to work out the curvature formula in terms of momentum profiles.To make curvature formula fit in the momentum profile, define $(\tau,p): L'\rightarrow I\times X$. Here are some notations that will be used in the following: 
\begin{enumerate}
\item[$\bullet$] Let $\omega_\varphi$ represent the K\"ahler metric constructed by momentum profile $(\varphi,I)$, and we can rewrite (\ref{calabime2}) in terms of $\tau$,
\begin{equation}\label{calabime3}
\begin{split}
\omega_\varphi &=p^*(\omega_X-\tau\gamma)+\varphi(\tau)\frac{idu\wedge d\overline{u}}{|u|^2}\\
&=p^*\omega_X(\tau)+\varphi(\tau)\frac{idu\wedge d\overline{u}}{|u|^2}
\end{split}
\end{equation}
\item[$\bullet$] Let $B$ denote the endomorphism $\omega_X^{-1}\gamma$, $\rho_X$ be the Ricci curvature form of $X$, define the following functions on $I\times X$, 
\begin{align*}
Q(\tau)&=\det(I-\tau B),\\
R(\tau)&=\tr[(I-\tau B)^{-1}(\omega^{-1}_X\rho_X)]. 
\end{align*}
\end{enumerate}
Then, the Ricci curvature, Laplacian and scalar curvature have the following representation in terms of momentum profile and notations above
\begin{enumerate}
\item[$\bullet$] The Ricci form of $\omega_\varphi$,
\begin{equation}\label{cur1}
\rho_\varphi=p^*\rho_X-i\partial\overline{\partial}\log \varphi Q(\tau)
\end{equation}
\item[$\bullet$] Laplacian of a circle invariant smooth function $s$ with respect to $\omega_{\varphi}$: notice that $s$ can factor through $(\tau,p): L' \rightarrow I\times X$, especially, $s$ can be viewed a smooth function on $I \times X$, then
\begin{equation}\label{cur2}
\Delta_\varphi s=\Delta_{\omega_X(\tau)} s(\tau,\cdot)+\frac{1}{Q}\frac{\partial}{\partial \tau}\Big[\varphi Q\frac{\partial s}{\partial \tau}\Big].
\end{equation}
\item[$\bullet$] Scalar curvature $S_\varphi$,
\begin{equation}\label{cur3}
\begin{split}
S_\varphi&=S(\tau)-\frac{1}{Q}\frac{\partial^2}{\partial \tau^2}(\varphi Q)(\tau)\\
&=R(\tau)-\Delta_{\omega_{X(\tau)}}\log Q(\tau)-\frac{1}{Q}\frac{\partial^2}{\partial \tau^2}(\varphi Q)(\tau)
\end{split}
\end{equation}
\end{enumerate}
The formula (\ref{cur1}), (\ref{cur2}), (\ref{cur3}) can be easliy checked in local bundle coordinates. For instance, the formula (\ref{cur2}) follows by observing that if we write coordinates $(z_1,\ldots, z_{n-1}, u)$ where $\{z_1,\ldots, z_{n-1}\}$ is the bundle adapted coordinates of $X$ such that $\partial_i \log h= \partial_{\overline{i}}\log h=0$, 
\begin{align}
\Delta_\varphi s 
& = g^{i\overline{j}}_\tau \partial_i \partial_{\overline{j}} s(\tau,\cdot)
+ \varphi(\tau)\big( g^{i\overline{j}}_\tau \partial_i \partial_{\overline{j}} \log h \big) \frac{\partial s}{\partial \tau}
+ \frac{\partial}{\partial \tau} \Big( \varphi \frac{\partial s}{\partial \tau} \Big) \nonumber
\\& = \Delta_{\omega_{X}(\tau)}s(\tau,\cdot) - \varphi(\tau) ( \tr_{\omega_X(\tau)}\gamma ) \frac{\partial s}{\partial \tau}+ \frac{\partial}{\partial \tau} \Big( \varphi \frac{\partial s}{\partial \tau} \Big) . \label{cur4}
\end{align}
And 
\begin{align}
\tr_{\omega_X(\tau) \gamma} = \tr [(I-\tau B)^{-1} B] = -\frac{\partial}{\partial \tau} \det(I-\tau B)= -\frac{\partial Q}{\partial \tau}. \label{cur5}
\end{align}
By inserting (\ref{cur4}) into (\ref{cur5}), we get (\ref{cur2}). To view (\ref{cur3}) as an ODE with prescribed scalar curvature $S_\varphi$, we still need the following \textit{$\sigma$-constant} condition,
\begin{define}
The data $\{(L,h), (X,\omega_X)\}$ with moment map $(\tau,I)$ is said to be $\sigma$-constant, if
\begin{enumerate}
\item[(a)] $B=\omega_X^{-1}\gamma$ has constant eigenvalues on $X$.
\item[(b)] $\omega_{X}(\tau)$ has constant scalar curvature for each $\tau\in I$ 
\end{enumerate}
\end{define}
In the case of $\sigma$-constant, $ Q(\tau)$ is a polynomial in $\tau$ and $\Delta_{X(\tau)}\log Q(\tau)=0$. Then, we can assume that $S(\tau)=R(\tau)=P(\tau)/Q(\tau)$ for some polynomial $P$ in $\tau$, based on the definition of $R(\tau)$. Therefore, we reduce the problem of prescribed scalar curvature to a second order ODE,
\begin{equation}\label{coreode}
(\varphi Q)''+QS_{\varphi}=P.
\end{equation}

\subsection{Classification of $G$-invariant cscK metrics } \label{calabisec1}

In this subsection, we always assume $\dim_\CC L= n$. To prove the classification Theorem \ref{main} , we would like to apply the technique discussed in Section \ref{calabisec}. According to the $G$-symmetry of $X$ and its line bundle $L$, $\omega$, $\gamma$ are $G$-invariant (1,1) forms on $X$. Hence, the data $\{(L_\lambda,h),(X,\omega_X)\}$ satisfy the conditions of $\sigma$-constant. 

In the sequel, we shall compute the explicit formula of the polynomials $Q(\tau)$, $P(\tau)$ in terms of $\omega_X$ and corresponding weight $\lambda$. Recall the formulas (\ref{2f2}) and (\ref{chern2}) and $\omega_X$, $\gamma$ can be expressed in terms of $dz_\alpha=\eta_\alpha +i \xi_\alpha$ and $d\overline{z}_\alpha= \eta_\alpha - i\xi_\alpha$, 
\begin{align*}
\omega_X = \frac{i}{2} C_{\alpha,S}\ dz_\alpha \wedge d\overline{z}_\alpha,\qquad \gamma=-i\partial\overline{\partial} \log h= \frac{i}{4}\sum_{\alpha\in D^+}( \lambda , \alpha )\ dz_\alpha \wedge d\overline{z}_\alpha,
\end{align*} 
where $S\in \sff$ such that $C_\alpha(S)>0$. Then, the matrix $B$ is diagonal, then $Q(\tau)$ has the following expression,
\begin{align*}
Q(\tau)=\det(I-\tau B)=\prod_{\alpha \in D^+}\bigg[1-{\tau} \frac{(\lambda, \alpha)}{2 C_{\alpha,S}} \bigg],
\end{align*}
Since the Ricci curvature $\rho_X$ has the expression, 
\begin{align*}
\rho_X = \frac{i}{4} \sum_{\alpha \in D^+} (\alpha, \delta)\ dz_\alpha \wedge d\overline{z}_\alpha 
\end{align*}
Then, 
\begin{align*}
R(\tau)= \tr \big[(I-\tau B)^{-1}\ \omega_X^{-1} \rho_X \big]= \sum_{\alpha \in D^+ } \frac{(\alpha,\delta)}{2C_{\alpha,S}-\tau(\lambda,\alpha)}. 
\end{align*}
Hence, the ODE (\ref{coreode}) can be rewrite as follows,
\begin{align}
\Big( \varphi \prod_{\alpha \in D^+} \big( 2 C_{\alpha,S} - \tau (\lambda,\alpha) \big) & \Big)'' + S_\varphi \prod_{\alpha\in D^+}\big( 2 C_{\alpha,S}- \tau (\lambda, \alpha) \big) \nonumber
\\ &= \prod_{\alpha \in D^+} \big( 2 C_{\alpha,S}- \tau (\lambda,\alpha) \big) \sum_{\alpha \in D^+}\frac{(\alpha,\delta)}{2 C_{\alpha,S}-\tau (\lambda, \alpha)}  \label{coreode1}
\end{align}
To determine the initial data of the ODE (6.11), we need to apply the following completeness proposition in (\cite{hwang2002momentum}, Proposition 2.2, Proposition 2.3).
\begin{pro}\label{completeness}\textup{(A.D. Hwang, M.A. Singer)} 
Let $(I,\varphi)$ be a given momentum profile. Then the associated fibre metric is complete if and only if the following conditions hold at each endpoint of $I$
\begin{enumerate}
\item[$\bullet$] Finite Endpoints: $\varphi$ satisfies one of the following conditions,
\begin{enumerate}
\item[(i)] $\varphi$ vanishes to first order with $|\varphi'|=1$; or
\item[(ii)] $\varphi$ vanishes to order at least two.
\end{enumerate}
\item[$\bullet$] Infinite Endpoints: $\varphi$ grows at most quadratically, i.e., $\varphi\leq K\tau^2$.
\end{enumerate}
\end{pro}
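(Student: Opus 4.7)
The plan is to reduce completeness of the fibre metric to the convergence or divergence of the radial arc length integral near each endpoint of $I$, and then dispose of the finite and infinite cases separately.

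First I would restrict to a single fibre $L_p \cong \CC_u$, on which the metric $\varphi(\tau)|du/u|^2$ is rotationally symmetric, so completeness reduces to a one-dimensional radial problem. Using $\tau$ as a radial coordinate via the Legendre transform (with $t = \log|u|^2$ and $d\tau/dt = \varphi(\tau)$), the arc length along a radial ray between levels $\tau_0$ and $\tau$ is
\[
s(\tau) \;=\; \int_{\tau_0}^{\tau} \frac{dx}{2\sqrt{\varphi(x)}}.
\]
The fibre is complete iff, at each endpoint of $I$, either this integral diverges (no finite-length geodesic can escape) or the endpoint extends smoothly to an interior point of the completed fibre. This dichotomy is the backbone of the proof.

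At an infinite endpoint $\tau \to \infty$ only divergence is available: the bound $\varphi(\tau) \leq K\tau^2$ gives $1/\sqrt{\varphi(\tau)} \geq K^{-1/2}/\tau$ for large $\tau$, so the arc length integral diverges, while super-quadratic growth yields convergence and incompleteness. At a finite endpoint, vanishing of $\varphi$ to order at least two again forces divergence of the arc length integral (an infinitely deep cylindrical or hyperbolic end), so the fibre is complete on its own. If instead $\varphi$ vanishes only to first order, the integral converges, so completeness requires adding the endpoint as a fixed point of the circle action; to analyse this I would recover the Calabi potential $f(t) = \int \tau\,d\tau/\varphi$, pull back along $t = \log|u|^2$, and Taylor-expand $\varphi(\tau) = \varphi'(0)\tau + a_2\tau^2 + \cdots$ to express $f$ as a power series in $|u|$. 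A direct calculation shows the resulting K\"ahler form extends smoothly across $u = 0$ precisely when $|\varphi'(0)| = 1$, and produces a cone angle $2\pi|\varphi'(0)|$ otherwise.

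The main obstacle is exactly this smoothness verification at a first-order finite endpoint. It amounts to matching against the flat local model $(\CC,\,i\,du \wedge d\overline{u})$, whose moment map is $\tau = |u|^2$ and whose momentum profile is $\varphi(\tau) = \tau$ with $\varphi'(0) = 1$; any deviation from the normalisation $|\varphi'(0)| = 1$ introduces a $\log|u|$ term in $f(t)$, i.e.\ a Laurent term in $f'(t)$, that obstructs smoothness of $dd^c f$ across $u = 0$. Once this local normalisation is pinned down, the full completeness criterion follows by applying the divergent-arc-length-versus-smooth-extension dichotomy at each endpoint of $I$.
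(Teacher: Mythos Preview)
The paper does not prove this proposition at all: it is quoted verbatim from Hwang--Singer \cite{hwang2002momentum}, Propositions 2.2 and 2.3, and is used as a black box to set up the initial conditions for the ODE \eqref{coreode1}. So there is no ``paper's own proof'' to compare against; the only relevant ingredient the paper supplies is the arc-length formula $s(\tau)=\int d\tau/(2\sqrt{\varphi})$ in item (c) of Section~\ref{calabisec}, which is exactly the quantity you build your argument around.

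Your sketch is correct and is essentially the standard argument. One minor simplification: at a first-order finite endpoint you do not need to reconstruct the full potential $f$ and Taylor-expand. From $\varphi(\tau)=a\tau+O(\tau^2)$ and $dt=d\tau/\varphi$ you get $\tau\sim c\,|u|^{2a}$ directly, so the fibre metric $\varphi(\tau)\,|du/u|^2$ behaves like $a\,|u|^{2a-2}\,|du|^2$ near $u=0$; this is smooth and nondegenerate exactly when $a=1$ (or $a=-1$ at the other end), and otherwise produces the cone angle $2\pi|a|$ you mention. This bypasses the step of tracking a possible $\log|u|$ term through $f$.
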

And the corresponding $(L',\omega)$ behaves differently under different decay conditions provided in Proposition \ref{completeness}. We conclude the corresponding relations in the table \ref{class}, where we consider finite ends at $\tau=0$ and infinite ends as $\tau \rightarrow \infty$. The proof of these bundle behaviors directly follows from the reconstruction of the data $(L',\omega)$ by moment profile $(\varphi,I)$, (a)--(c). 

\begin{table}[h]
\centering
\begin{tabular}{|c|c|c|c|}
\hline
Type of Ends & Decay (Growth)  & Fibre Range $(t)$ & Distance to \\
& conditions & & Ends ($\omega$)\\
\hline
finite ends & $\varphi=0, \quad \varphi'=1$, & $[-\infty, t_0]$  & finite\\
\hline
finite ends & $\varphi=\varphi'=0$ & $(-\infty, t_0]$  & infinite\\
\hline
infinite ends & $C\tau^{1+\epsilon}\leq\varphi\leq K \tau^2$ & $[t_0,t_{\text{end}})$, ($t_{\text{end}}<\infty$) & infinite\\
\hline
infinite ends & $\varphi\leq C \tau$ & $[t_0,\infty)$ & infinite\\
\hline
\end{tabular}
\caption{Behaviors of $(L',\omega)$}
\label{class}
\end{table}

To fit in our cases, we define the momentum interval $I=[0,b)$ with $b\leq +\infty$. The reason we take the left ends to be $0$ is to ensure $\omega|_{X}=\omega_{S}$. Assume that the corresponding scalar curvature of $\omega_\varphi$ is constant. Combining with Proposition \ref{completeness} and table \ref{class}, we shall solve the ODE with initial condition $\varphi(0)=0$, $\varphi'(0)=1$ and $S_\varphi=C$. It is obvious that there is a unique solution $\varphi$ satisfies (\ref{coreode1}) and the initial condition.To understand the behavior of these metrics, we shall investigate the solution $\varphi$. In the sequel, the weight $\lambda$ is supposed to be semi-negative, thus $(\lambda, \alpha)\leq 0$. 

\subsubsection{Case 1: constant negative scalar curvature $(C<0)$.} In this case, the ODE (\ref{coreode1}) implies the following integral formula for the first derivative of $\varphi$
\begin{align*}
\Big( \varphi \prod_{\alpha \in D^+} \big( 2 C_{\alpha,S} - &\tau (\lambda,\alpha) \big)  \Big)'
=\int_{0}^\tau - C \prod_{\alpha\in D^+}\big( 2 C_{\alpha,S}- t (\lambda, \alpha) \big) dt
 \\ &+\int_0^{\tau}\prod_{\alpha \in D^+} \big( 2 C_{\alpha,S}- t (\lambda,\alpha) \big) \Big(\sum_{\alpha \in D^+}\frac{(\alpha,\delta)}{2 C_{\alpha,S}-t (\lambda, \alpha)} \Big) dt+ \prod_{\alpha\in D^+} 2 C_{\alpha,S}.
\end{align*}
By assumption, both $C_{\alpha,S}$ and $-C$ are positive and $-(\lambda, \alpha)$ is nonnegative. Hence, in the momentum interval $I=[0,b)$, $\varphi$ is strictly increasing with initial value $\varphi(0)=0$. Since $\varphi$ is a rational function with degree two, referring to table \ref{class},  the corresponding metric is complete in the fibre coordinate $(t; -\infty \leq t < t_{\text{end}}<\infty)$ which implies that the metric $g_\varphi$  blowup to infinity when the log-Hermitian coordinate approach to $t_{\text{end}}$. 

In fact, the K\"ahler metric defined by $\varphi$ when $C<0$ is asymptotic to hyperbolic metric along each fibre. Since the standard hyperbolic metric in complex disk can be written as
\begin{align} \label{hypmod}
g_H = d\rho^2 + (\sinh \rho )^2 \eta_0,
\end{align}
where $\rho$ is the hyperbolic distance to origin. The model metric (\ref{hypmod}) can be applied to describe the asymptotic behavior of K\"ahler metric along each fibre. In particular, if there exists a radial function $\rho$ defined outside a compact neighborhood $K$ of zero level and reach to infinity approaching to boundary such that the metric can be represented in outside the compact neighborhood $K$,
\begin{align}\label{hypasym}
g = d\rho^2 + e^{2\rho} g(\rho),
\end{align}
where $g(\rho)$ approaches to $\eta_0^2$ at boundary. To fit in the asymptotic condition, define $\rho$ to be a distance function with respect to K\"ahler metric $\omega_\varphi$. Let $S_\varphi= C < 0$ in ODE (\ref{coreode}) and the leading term of $\varphi(\tau)$ is $-C\tau^2/(n^2 + n)$. If we write $\alpha_{n,C}^2 = -4C/(n^2+n)$, then the fibre metric can be written as 
\begin{align*}
g_{\text{fibre}} = d\rho^2 + \frac{1}{\alpha^2_{n,C}} \phi(\rho) \eta_0^2, \quad \phi(\rho) =    e^{2 \alpha_{n,C} \rho} + c_0 + c_1 e^{-2 \alpha_{n,C} \rho} + \ldots.
\end{align*} 
Hence, let $s=\alpha_{n,C} \rho$, 
\begin{align*}
g_{\text{fibre}} = \frac{1}{\alpha_{n, C}^2} \big[ds^2 + \phi(s) \eta_0^2\big].
\end{align*}
Therefore, we conclude that $g_{\text{fibre}}$ is asymptotic to a hyperbolic metric.

\subsubsection{Case 2: zero scalar curvature $(C=0)$.} In this case, the unique solution satisfies
\begin{align*}
\Big( \varphi \prod_{\alpha \in D^+} \big( &2 C_{\alpha,S} - \tau (\lambda,\alpha) \big)  \Big)'=
 \\ &\int_0^{\tau}\prod_{\alpha \in D^+} \big( 2 C_{\alpha,S}- t (\lambda,\alpha) \big) \sum_{\alpha\in D^+}\frac{(\alpha,\delta)}{2 C_{\alpha,S}-t (\lambda, \alpha)}  dt+ \prod_{\alpha\in D^+} 2 C_{\alpha,S}.
        \end{align*}
Likewise, $\varphi$ is a strictly increasing function with initial value $\varphi(0)=0$. However, in this case, the degree of $\varphi$ is one. According to table \ref{class}, $\omega_\varphi$ is well-defined over the whole bundle $L_\lambda$. Besides, $\omega_\varphi$ is asymptotic to a K\"ahler cone. Indeed, by solving the ODE (\ref{coreode1}), the leading coefficient of the solution is given as follows,
\begin{align*}
i_{\lambda, X} = \frac{1}{(n-1)n} \sum_{\alpha \in D^+} \frac{(\alpha, \delta)}{(\alpha,-\lambda)}.
\end{align*}
We call $i_{\lambda, X} $ the \textbf{metric index} of line bundle of $(L_\lambda, X)$. Recall the K\"ahler metric associated to $\varphi$ is given as in (\ref{calabime3}). Let $g_X(\tau)$, $g_X$, $g_\gamma$ be the metric corresponding to $\omega_X(\tau) = \omega_X - \tau \gamma$, $\omega_X$,  $-\gamma$ respectively then
\begin{align*}
g_\varphi &= g_X(\tau) + 2\varphi(\tau) \eta_0^2 + 2 \varphi (\tau) \mu^2,\\
& =g_X + \tau g_\gamma +2 \varphi(\tau) \eta_0^2+ 2\varphi(\tau)\mu^2.
\end{align*}
where $\mu$ and $\eta_0$ are the dual of $r\partial/\partial r$ and $X_0$ respectively. Then, on each level set of $L$, $M(\tau)$, there is a metric induced by $g_\varphi$, denoted by $g(\tau)$,
\begin{align*}
g_{M(\tau)}= g_X + \tau g_\gamma +2 \varphi(\tau) \eta_0^2.
\end{align*}
Let $C_\lambda$ be the cone associated with $L_\lambda$ by collapsing the base manifold $X$.  Then, we should determine the radial function $l$ of cone $C_\lambda$ such that the scalar-flat K\"ahler metric is asymptotically conical to $A i\partial\overline{\partial} l^2$, where $A$ is the constant coefficient and can be canceled by rescaling. Based on the discussion in Section \ref{calabisec}, we have the following relationship between $\tau$ and $t$,
\begin{align*}
t=\int_{\tau_0}^{\tau(t)} \frac{dx}{\varphi(x)} = \int_{\tau_0}^{\tau(t)} \frac{dx}{i_{\lambda, X} x}+\frac{a_1 dx}{x^2}+\ldots = a_0+\frac{1}{i_{\lambda, X}}\log \tau - \frac{a_1}{\tau}+\ldots 
\end{align*}
where the second equality is just the Taylor expansion of $1/\varphi(x)$. Taking exponential and solve for $\tau$, we can see that $\tau$ admits the following expansion at infinity,
\begin{align}
\tau= b_1 r^{2i_{\lambda, X}} + b_0 + b_{-1} r^{-2 i_{\lambda, X}}+\ldots
\end{align}
Now, let $l=r^{i_{\lambda,X}}$, then the model K\"ahler metric over $C_\lambda$ is defined by the radial function $l$,
\begin{align*}
\omega_{\textup{mod}}=
i\partial\overline{\partial} l^2= - i_{\lambda, X} l^2 \gamma + 2 i_{\lambda, X}^2 l^2  \mu \wedge \eta_0
\end{align*}
Rewrite the model K\"ahler form in terms of metric,
\begin{align*}
g_{\textup{mod}} &= i_{\lambda, X} l^2 g_\gamma+ 2 i_{\lambda, X}^2 l^2 \eta_0^2 + 2 i_{\lambda, X}^2 l^2 \mu^2\\
&= l^2 (i_{\lambda, X}  g_\gamma+  2 i_{\lambda, X}^2 \eta_0^2 ) + 2 dl^2
\end{align*}
And the metric can be represented by $dl$ as follows,
\begin{align*}
g_\varphi & = l^2\Big( \frac{1}{l^2} g_X +  \frac{\tau}{l^2} g_\gamma +\frac{2 \varphi(\tau)}{l^2} \eta^2_0 \Big) + \frac{2\varphi(\tau)}{i_{\lambda, X}^2 l^2} dl^2\\
& = {b_1} \big[ l^2 (i_{\lambda, X} g_\gamma+ 2 i^2_{\lambda, X} \eta^2_0) + dl^2 \big]+ O(l^{-2})\\
& = b_1 g_{\textup{mod}} +O(l^{-2})
\end{align*}
Therefore, all scalar-flat K\"ahler metrics on $L_\lambda$ are asymptotically conical to $(C_\lambda, g_{\textup{mod}})$. In general, $g_\varphi$ decays to $g_{\textup{mod}}$ by order $-2$, which can be improved in some special cases. For instance, let the base metric $\omega_X $ be equal to the curvature form $\gamma$, then, the similar calculation shows that the metric $g_\varphi$ with scalar-flat curvature decays to order $-2n+2$.

\subsubsection{Case 3: constant positive scalar curvature $(C > 0)$.} If we write 
$$ \Phi (\tau) = 
\varphi (\tau) \prod_{\alpha \in D^+} \big( 2C_{\alpha,S} - \tau (\lambda,\alpha) \big),$$
In the sequel, we investigate the behavior of $\Phi(\tau)$ when $\tau \geq 0$. The following lemma shall be applied.

\begin{lem} \label{poly}
Consider the polynomial,
\begin{align*}
p(x)=c\prod_{k=1}^n (x+a_k)-\sum_{k=1}^n b_k \prod_{1\leq i\leq n,i\ne k}(x+ a_i),
\end{align*}
where $c$, $a_k$, $b_k$, $1\leq k\leq n$ are all positive. Then $p(x)$ has at most one positive root.
\end{lem}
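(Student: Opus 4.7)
The plan is to reduce the problem to showing that a certain rational function is strictly monotone on $(0,\infty)$. Since all $a_k$ are positive, the product $\prod_{k=1}^n (x+a_k)$ is strictly positive for every $x > 0$, so dividing through by it is harmless and preserves the zero set on the positive axis. After this division, $p(x)=0$ becomes equivalent to
\[
q(x) := c - \sum_{k=1}^{n} \frac{b_k}{x+a_k} = 0.
\]

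Next, I would differentiate $q$ term by term on $(0,\infty)$. Because $a_k > 0$, each summand $b_k/(x+a_k)$ is a smooth, strictly decreasing, positive function of $x$ on $(0,\infty)$. Consequently $q$ is the sum of $c$ and strictly increasing functions $-b_k/(x+a_k)$, so $q$ itself is strictly increasing on $(0,\infty)$. A strictly monotone continuous function has at most one zero on any interval, which gives at most one positive root for $q$, hence for $p$.

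There is essentially no obstacle here; the only minor care needed is to justify that the zeros of $p$ on $(0,\infty)$ coincide with the zeros of $q$ (this uses positivity of the product factor, which in turn uses $a_k > 0$), and to note that monotonicity on the open half-line $(0,\infty)$ suffices because the statement concerns \emph{positive} roots. I would write the proof in two short paragraphs: one setting up the factorization $p(x) = q(x)\prod_k(x+a_k)$ on $(0,\infty)$, and one establishing strict monotonicity of $q$ and concluding.
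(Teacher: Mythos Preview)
Your proposal is correct and follows essentially the same approach as the paper: divide $p(x)$ by $\prod_k (x+a_k)$ on $(0,\infty)$ to obtain $q(x)=c-\sum_k b_k/(x+a_k)$, observe that $q'(x)=\sum_k b_k/(x+a_k)^2>0$, and conclude that $q$ (hence $p$) has at most one positive root.
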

\begin{proof} Define a new polynomial,
\begin{align*}
q(x)=\frac{1}{\prod_{k=1}^n (x+a_k)} \cdot p(x) = c-\sum_{k=1}^n \frac{b_k}{x+a_k}
\end{align*}.
Notice that $q(x)$ has the same positive roots as $p(x)$. It suffices to investigate the positive root of $q(x)$. Since we have,
\begin{align*}
q'(x)=\sum_{k=1}^n \frac{b_k}{(x+a_k)^2} >0
\end{align*}
Hence, $q(x)$ is an increasing function with $\lim_{x\rightarrow \infty} q(x)= c>0$. In conclusion, we have the following result,
\begin{enumerate}
\item[(i)] If $\displaystyle c-b_k / a_k >0$, there exists no positive root.
\item[(ii)] If $\displaystyle c- b_k / a_k <0$, there exists exact one positive root.
\end{enumerate}
We complete the proof.\end{proof}
Noting that $-\Phi''(\tau)$ is a polynomial of type in Lemma \ref{poly}, $\Phi''(\tau)$, $\tau \geq 0$, is a decreasing function with at most one positive root. Then, according to the initial condition of $\varphi$, we have $\Phi'(0)=\prod_{\alpha\in D^+}2 C_{\alpha,S}>0$. Since the leading coefficient $\Phi'(\tau)$ is negative, then there exists exact one positive root for $\Phi'(\tau)$.  
Then, based on the fact that $\Phi'(\tau)$ is positive near $0$ and the initial condition, $\Phi(0)$, we deduce that $\Phi(\tau)$ is increasing first and decreasing to negative infinity as the leading coefficient of $\Phi(\tau)$ is also negative. That is to say, there exists exact one positive simple root of $\Phi(\tau)$, so as $\varphi(\tau)$. Let $b$ be the only positive root of $\varphi$, then the momentum interval in this case is $[0,b)$. Referring to table \ref{class}, the metric $\omega_\varphi$ is defined over the whole bundle of $L_\alpha$. $\omega_\varphi$ can be completed by adding a divisor at infinity. The completed space is smooth if and only if $\varphi'(b)=-1$, otherwise each fibre has a singularity at infinity.

To investigate the singularity, consider the case that $\varphi'(0) \ne 1$ (assume $\varphi'(0)=a$), then the expansion of $\varphi(\tau)$ at $\tau=0$ can be written as
\begin{align*}
\varphi(\tau) = a\tau + O(\tau^2).
\end{align*}
According to reconstruction of momentum profile, we can get an expansion of $\varphi(\tau)$ in terms of $r$,
\begin{align*}
\varphi(\tau)= c_1 r^{2a} + O(r^{4a}).
\end{align*}
Recall that the fibre metric of $\omega_\varphi$ is given as $\displaystyle g_{\text{fibre}} = \varphi(\tau)\frac{|du|^2}{|u|^2}$. Combining with the expansion formula of $\varphi(\tau)$, the fibre metric can be represented as follows: let $l=r^a$, then
\begin{align*}
   g_{\text{fibre}} = C\big[(dl)^2 + a^2 l^2 (\eta_0)^2 \big] +  O(l^2).
\end{align*}
Hence, along each fibre the singularity just looks like a cone around the infinity.

\bibliographystyle{plain}
\bibliography{Reference}

\begin{thebibliography}{10}

\bibitem{adams1982lectures}
John~Frank Adams.
\newblock {\em Lectures on Lie groups}.
\newblock University of Chicago Press, 1982.

\bibitem{auvray2017space}
Hugues Auvray.
\newblock The space of poincar{\'e} type k{\"a}hler metrics on the complement
  of a divisor.
\newblock {\em Journal f{\"u}r die reine und angewandte Mathematik (Crelles
  Journal)}, 2017(722):1--64, 2017.

\bibitem{berman2017convexity}
Robert Berman and Bo~Berndtsson.
\newblock Convexity of the k-energy on the space of k{\"a}hler metrics and
  uniqueness of extremal metrics.
\newblock {\em Journal of the American Mathematical Society}, 30(4):1165--1196,
  2017.

\bibitem{besse2007einstein}
Arthur~L Besse.
\newblock {\em Einstein manifolds}.
\newblock Springer Science \& Business Media, 2007.

\bibitem{10.2307/2372795}
A.~Borel and F.~Hirzebruch.
\newblock Characteristic classes and homogeneous spaces, i.
\newblock {\em American Journal of Mathematics}, 80(2):458--538, 1958.

\bibitem{borel1962kompakte}
Armand Borel and Reinhold Remmert.
\newblock {\"U}ber kompakte homogene k{\"a}hlersche mannigfaltigkeiten.
\newblock {\em Mathematische Annalen}, 145(5):429--439, 1962.

\bibitem{bott1957homogeneous}
Raoul Bott.
\newblock Homogeneous vector bundles.
\newblock {\em Annals of mathematics}, pages 203--248, 1957.

\bibitem{bourbakilie}
N~Bourbaki.
\newblock Lie groups and lie algebras, chapters 1-3 (1989).

\bibitem{calderbank2004einstein}
David~MJ Calderbank and Michael~A Singer.
\newblock Einstein metrics and complex singularities.
\newblock {\em Inventiones mathematicae}, 156(2):405--443, 2004.

\bibitem{chen2000space}
Xiuxiong Chen et~al.
\newblock The space of k{\"a}hler metrics.
\newblock {\em Journal of Differential Geometry}, 56(2):189--234, 2000.

\bibitem{eguchi1979self}
Tohru Eguchi and Andrew~J Hanson.
\newblock Self-dual solutions to euclidean gravity.
\newblock {\em Annals of Physics}, 120(1):82--106, 1979.

\bibitem{han2019deformation}
Jiyuan Han and Jeff~A Viaclovsky.
\newblock Deformation theory of scalar-flat k{\"a}hler ale surfaces.
\newblock {\em American Journal of Mathematics}, 141(6):1547--1589, 2019.

\bibitem{honda2013deformation}
Nobuhiro Honda.
\newblock Deformation of lebrun’s ale metrics with negative mass.
\newblock {\em Communications in Mathematical Physics}, 322(1):127--148, 2013.

\bibitem{honda2014scalar}
Nobuhiro Honda.
\newblock Scalar flat k\"ahler metrics on affine bundles over$mathbb{CP}^1$.
\newblock {\em arXiv preprint arXiv:1311.2391}, 2013.

\bibitem{humphreys2012linear}
James~E Humphreys.
\newblock {\em Linear algebraic groups}, volume~21.
\newblock Springer Science \& Business Media, 2012.

\bibitem{hwang2002momentum}
Andrew Hwang and Michael Singer.
\newblock A momentum construction for circle-invariant k{\"a}hler metrics.
\newblock {\em Transactions of the American Mathematical Society},
  354(6):2285--2325, 2002.

\bibitem{joyce1995}
Dominic~D. Joyce.
\newblock Explicit construction of self-dual $4$ -manifolds.
\newblock {\em Duke Math. J.}, 77(3):519--552, 03 1995.

\bibitem{knapp2013lie}
Anthony~W Knapp.
\newblock {\em Lie groups beyond an introduction}, volume 140.
\newblock Springer Science \& Business Media, 2013.

\bibitem{lebrun1988}
Claude LeBrun.
\newblock Counter-examples to the generalized positive action conjecture.
\newblock {\em Comm. Math. Phys.}, 118(4):591--596, 1988.

\bibitem{lebrun1991explicit}
Claude LeBrun.
\newblock Explicit self-dual metrics on cp2\#•••\# cp2.
\newblock In {\em J. Differential Geom}. Citeseer, 1991.

\bibitem{lock2019smorgaasbord}
Michael~T Lock and Jeff~A Viaclovsky.
\newblock A sm{\"o}rg{\aa}sbord of scalar-flat k{\"a}hler ale surfaces.
\newblock {\em Journal f{\"u}r die reine und angewandte Mathematik (Crelles
  Journal)}, 2019(746):171--208, 2019.

\bibitem{matsushima1957}
Yozô Matsushima.
\newblock Sur les espaces homogènes kählériens d'un groupe de lie réductif.
\newblock {\em Nagoya Math. J.}, 11:53--60, 1957.

\bibitem{myers1939group}
Sumner~B Myers and Norman~Earl Steenrod.
\newblock The group of isometries of a riemannian manifold.
\newblock {\em Annals of Mathematics}, pages 400--416, 1939.

\bibitem{popov1974picard}
Vladimir~L Popov.
\newblock Picard groups of homogeneous spaces of linear algebraic groups and
  one-dimensional homogeneous vector bundles.
\newblock {\em Mathematics of the USSR-Izvestiya}, 8(2):301, 1974.

\bibitem{serre1956geometrie}
Jean-Pierre Serre.
\newblock G{\'e}om{\'e}trie alg{\'e}brique et g{\'e}om{\'e}trie analytique.
\newblock In {\em Annales de l'institut Fourier}, volume~6, pages 1--42, 1956.

\end{thebibliography}
\end{document}